\documentclass{amsart}
\usepackage{amssymb, amsmath, amsthm, color, hyperref, url, fullpage,euscript,mathrsfs}
% set font encoding for PDFLaTeX or XeLaTeX

\usepackage[paper=a4paper,left=30mm,right=20mm,top=25mm,bottom=30mm]{geometry}
    \newenvironment{dedication}
        {\vspace{6ex}\begin{quotation}\begin{center}\begin{em}}
        {\par\end{em}\end{center}\end{quotation}}
\usepackage{ifxetex}
\ifxetex
  \usepackage{fontspec}
\else
  \usepackage[T1]{fontenc}
  \usepackage[utf8]{inputenc}
  \usepackage{lmodern}
\fi

\usepackage[all]{xy}
\usepackage[table]{xcolor}

\usepackage{epsfig}
\newtheorem{theorem}{Theorem}

\newtheorem{lemma}{Lemma}
\newtheorem{example}{Example}
\newtheorem{conj}{Conjecture}%[section]
%[section]
\newtheorem{remark}{Remark}
\newtheorem{defn}{Definition}

%%%%%%%%%%%%%%%%%%%%%%%%
%%%%%%%%%%%%%%%%%%%%%%%%

\def\Q{\mathbb Q}
\def\C{\mathbb C}
\def\({\left(}
\def\){\right)}
\def\l{\lambda}

\def\G{\Gamma}
\def\a{\alpha}

\def \Z{\mathbb Z}
\def \F{\mathbb F}

\newcommand*\HYPERskip{&}
\catcode`,\active
\newcommand*\pFq{
\begingroup
\catcode`\,\active
\def ,{\HYPERskip}%
\doHyper
}
\catcode`\,12
\def\doHyper#1#2#3#4#5{%
\, _{#1}F_{#2}\left[\begin{matrix}#3 \smallskip \\  #4\end{matrix} \; ; \; #5\right]%
\endgroup
}

\title{Some Numeric Hypergeometric Supercongruences}

\author{Ling Long}

\address{303 Lockett Hall, Louisiana State University, Baton Rouge, LA 70803, USA, llong@lsu.edu}
\thanks{The work has been by NSF  DMS \#1602047. The numeric computation has been done by using both \texttt{Magma} and \texttt{Sagemath}. The author would like to thank Wadim Zudilin for his constant discussions, concrete and constructive suggestions, and Frits Beukers for his helpful correspondence. This article was motivated by the discussion held at the ``Hypergeometric motives and Calabi–Yau differential equations" workshop, which took place at the MATRIX institute, Melbourne Australia in January 2017. The author would also like to thank the referee for helpful sugguestions and comments leading to a refined version of this article.}
\date{}

\begin{document}
\maketitle

\begin{dedication}
{In celebration of Geoffrey Mason's 70th birthday.}
\end{dedication}

%\begin{center}
%{In celebration of Geoffrey Mason's 70th birthday}
%\end{center}

\begin{abstract}
In this article, we list a few hypergeometric supercongruence conjectures based on two evaluation formulas of Whipple and  numeric data computed using \texttt{Magma} and \texttt{Sagemath}.
\end{abstract}

\section{Introduction}

Let  $n$ be a positive integer, $a_1,\cdots, a_n,b_1, \cdots, b_n, \l\in \Q$, in particular $b_n=1$. Let $\alpha=\{a_1,\cdots,a_n\}$ and $\beta=\{b_1,\cdots,b_{n}\}$, be two multi-sets of rational numbers. Here $a_i$'s (resp. $b_j$'s) may repeat. The triple $\{\alpha,\beta;\l\}$ is called a \emph{hypergeometric datum}. The corresponding hypergeometric series (see \cite{AAR}) is defined by

\begin{equation*}
F(\alpha,\beta;\l):=\pFq{n}{n-1}{a_1&\cdots& a_{n-1}&a_n}{&b_1&\cdots&b_{n-1}}{\l}=\sum_{k=0}^\infty \frac{(a_1)_k\cdots(a_n)_k}{(b_1)_k\cdots (b_{n-1})_k}\frac{\l^k}{k!},
\end{equation*} where  $(a)_k=a(a+1)\cdots(a+k-1)=\frac{\G(a+k)}{\G(a)}$ with $\G(\cdot)$ being the Gamma function. As a function of $\l$, it satisfies an order-$n$ Fuchsian differential equation which has only three regular singularities located at $0,1,\infty$. The truncated hypergeometric series $F(\alpha,\beta;\l)_m:=\pFq{n}{n-1}{a_1&\cdots& a_{n-1}&a_n}{&b_1&\cdots&b_{n-1}}{\l}_m$ with subscript $m$ denotes the truncated series at $\l^m$th term.\medskip

Hypergeometric functions form an important class of special functions. They play many vital roles in differential equations, algebraic varieties, modular forms. In his recent papers \cite{CGM,CM}, Geoffrey Mason explored the role of hypergeometric functions in vector valued modular forms. In \cite{CGM} by  Franc, Gannon and Mason, $p$-adic properties of the coefficients of  hypergeometric functions are used to explain the unbounded denominator behavior of noncongruence vector valued modular forms. For related discussions on the  unbounded denominator behavior of genuine noncongruence modular forms, see \cite{Chen, KL,  LL}. In this paper we will discuss the $p$-adic properties of hypergeometric functions initiated in Dwork's work (see \cite{Dwork}).

\medskip

To present  Dwork type of congruence, we will first recall
\begin{defn}\label{def:dash}
Let $p$ be a fixed prime number and $r\in  \Z_p^\times$. Use $[r]_0$ to denote the first $p$-adic digit of $r$, namely an integer in $[0,p-1]$ which is congruence to $r$ modulo $p$. Dwork dash operation, a map from $\Z_p\rightarrow \Z_p$, is
defined by
$$
r'= (r+[-r]_0)/p,
$$
\end{defn} In other words,
$$r'p-r=[-r]_0.$$
For example, if $p$ is an odd prime, then $[-\frac12]_0=\frac{p-1}2$. If $r=\frac 76$ and $p\equiv 1\mod 3$, then $[-\frac 76]_0=\frac{p-7}6$. Hence $(\frac 76)'=(\frac 76+\frac{p-7}6)/p=\frac 16.$ When $p\equiv 2\mod 3$, $[-\frac 76]_0=\frac{5p-7}6$, $(\frac 76)'=\frac 56.$

Fix a prime $p>2$. We now recall the reflection formula for the $p$-adic Gamma function. It says for $x\in \Z_p$, \begin{equation}\label{eq:reflect}\G_p(x)\G_p(1-x)=(-1)^{a_0(x)}\end{equation} where $a_0(x)$ is unique integer in $[1,2,\cdots,p]$ such that $x\equiv a_0(x)\mod p$.  If $x\in \Z_p^\times $, $a_0(x)=[x]_0$. See \cite{Cohen} by Cohen for more properties of the $p$-adic Gamma function.
\medskip

Let $M:=M(\alpha,\beta;\l)$ be the least common denominator of all $a_i$'s, $b_j$'s and $\l$.  Let $\alpha':=\{ a_1',\cdots, a_n'\}$,  namely the image of $\alpha$ under the dash operation.

\begin{theorem}[Dwork, \cite{Dwork}] \label{thm:Dwork}Let $\alpha,\beta$ be two multi-sets of rational numbers of size $n$, where $a_i\in(0,1)$ and $\beta=\{1,\cdots,1\}$. Then   for any prime $p\nmid M(\alpha,\beta;\l)$,  integer $m\ge 1$, integers $t\ge s$
\begin{equation}\label{eq:Dwork}
F(\alpha,\beta;\l)_{mp^{s}-1}F(\alpha',\beta;\l^p)_{mp^{t-1}-1}\equiv F(\alpha',\beta;\l^p)_{mp^{s-1}-1}F(\alpha,\beta;\l)_{mp^{t}-1}\mod p^{s}.
\end{equation}
\end{theorem}When  $p\nmid F(\alpha,\beta;\l)_{p-1}$ in which case we say that $p$ is  \emph{ordinary} for $\{\alpha,\beta;\l\}$, there is a $p$-adid unit root $\mu_{\alpha,\beta;\l,p}\in \Z_p^\times$  such that for any integers $s,m>0$
\begin{equation}\label{eq:unit}
F(\alpha,\beta,\l)_{mp^{s}-1}/F(\alpha',\beta,\l^p)_{mp^{s-1}-1}\equiv \mu_{\alpha,\beta;\l,p} \mod p^s.
\end{equation}
\medskip

\begin{defn}
A multi-set $\alpha=\{a_1,\cdots,a_n\}$ of rational numbers is said to be \emph{defined over} $\Q$ if $$\prod_{i=1}^n(X-e^{2\pi i a_i})\in \Z[X].$$ A hypergeometric datum $\{\alpha,\beta;\l\}$ is said to be \emph{defined over} $\Q$ if both $\alpha,\beta$ are defined over $\Q$ and $\l\in \Q$.
\end{defn}

Below we focus on hypergeometric data satisfying the following  condition  \medskip
\begin{quote}
($\diamond$): \quad $\alpha=\{a_1,\cdots, a_{n}\}$  and $\beta=\{b_1,\cdots,b_{n-1},1\}$ where $0\le a_i<1$ and $0<b_j\le 1$,  both $\alpha$ and $\beta$ are defined over $\Q$, and  $a_i- b_j\notin \Z$ for each $i,j$.
\end{quote}\medskip

\begin{remark}\label{rem:1}
If $\alpha=\{a_1\cdots,a_n\}$ is defined over $\Q$ and each $0< a_i<1$ for each $i$, then $1-a_i\in \alpha$. Moreover, for any fixed prime $p$ not dividing the least common denominator of $a_i$'s,  the set $\alpha$ is closed under the dash operation defined in Definition \ref{def:dash}.
\end{remark}

For any holomorphic modular form $f$, we use $a_p(f)$ to denote its $p$th Fourier coefficient. For most of the listed modular forms in this paper, they will be identified by  their labels in L-Functions and Modular Forms Database (LMFDB) \cite{LMFDB}. For instance, $f_{8.6.a.a}$ denotes a level-8 weight 6 Hecke eigenform, $\eta(z)^8\eta(4z)^4+8\eta(4z)^{12}$ in terms of the Dedekind eta function, its label at LMFDB is $8.6.a.a$. \medskip

In \cite{LTYZ}, the author jointly with Tu,  Yui, and Zudilin proved a conjecture of Rodriguez-Villegas made in \cite{RV}.  Special cases of this conjecture has been obtained earlier by Kilbourn  \cite{Kilbourn} (based on \cite{AO00} by Ahlgren and Ono), McCarthy \cite{McCarthy-RV}, and Fuselier and McCarthy \cite{FM}. See \cite{Zudilin18} by Zudilin for a related background discussion.

\begin{theorem}[Long, Tu, Yui, and Zudilin \cite{LTYZ}]\label{thm:LTYZ}
Let $\beta=\{1,1,1,1\}$. For each given multi-set $\alpha=\{r_1,1-r_1,r_2,1-r_2\}$ where either each $r_i\in \{\frac12,\frac13,\frac14,\frac16\}$ or $(r_1,r_2)=(\frac 15,\frac 25), (\frac 18,\frac 38) $, $(\frac 1{10},\frac 3{10})$, $(\frac 1{12},\frac 5{12})$,  there exists an explicit weight 4 Hecke cuspidal eigenform depending on $\alpha$, denoted by $f_\alpha$ such that for each prime $p>5$,
$$\pFq{4}{3}{r_1&1-r_1&r_2&1-r_2}{&1&1&1}{1}_{p-1}\equiv a_p(f_\alpha) \mod p^3.$$
\end{theorem}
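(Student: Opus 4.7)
The plan is to establish the congruence via three ingredients: explicit modularity for the hypergeometric motive $H(\alpha,\beta)$, a Whipple-type hypergeometric evaluation, and a sharpening trick to pass from mod $p^2$ to mod $p^3$. For each listed $\alpha=\{r_1,1-r_1,r_2,1-r_2\}$ with $\beta=\{1,1,1,1\}$, the motive $H(\alpha,\beta)$ has Hodge numbers $(1,1,1,1)$ and is self-dual. The first step is to pin down the weight-$4$ Hecke cuspidal eigenform $f_\alpha$ whose $\ell$-adic Galois representation appears as a two-dimensional subquotient of $H(\alpha,\beta)$; in the CM range $r_i\in\{\tfrac12,\tfrac13,\tfrac14,\tfrac16\}$ this is available from the geometry of the associated rigid Calabi--Yau threefolds (Ahlgren--Ono, Kilbourn, McCarthy, Fuselier--McCarthy), and in the mixed cases $(r_1,r_2)\in\{(\tfrac15,\tfrac25),(\tfrac18,\tfrac38),(\tfrac1{10},\tfrac3{10}),(\tfrac1{10},\tfrac5{12})\}$ one identifies $f_\alpha$ by matching a few Fourier coefficients with numerically computed hypergeometric Frobenius traces.

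The second step is to convert the truncated sum $F(\alpha,\beta;1)_{p-1}$ into a finite $p$-adic expression. Replacing each Pochhammer $(a_i)_k=\G(a_i+k)/\G(a_i)$ by its $p$-adic Gamma counterpart via Gross--Koblitz, and then invoking the reflection formula \eqref{eq:reflect} on the self-dual pairs $(a_i,1-a_i)$ produces a symmetrized finite sum. A Whipple-type very-well-poised ${}_7F_6(1)\to{}_4F_3(1)$ evaluation then rewrites the symmetrized sum as the finite $p$-adic hypergeometric function $H_p(\alpha,\beta;1)$, which by the modularity input of the first step is congruent to $a_p(f_\alpha)$. Combining Dwork's theorem with the unit-root congruence \eqref{eq:unit} is expected at this point to deliver the target congruence modulo $p^2$.

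The main obstacle, and the crux of the proof, is upgrading to modulus $p^3$. My plan is to use a commutative formal group law perturbation: introduce a parameter $t$ by deforming the datum to $\alpha_t=\{r_1+t,1-r_1-t,r_2,1-r_2\}$, which preserves the self-dual pairing; apply Dwork's congruence to the perturbed datum; and exploit the involutive symmetry $a\leftrightarrow 1-a$ together with the Whipple evaluation to show that the first-order term in $t$ of the error vanishes identically. Specializing $t=0$ then gains the extra factor of $p$. A secondary difficulty is that the eight parameter pairs must be handled individually, since the exact Whipple-type identity and the attendant $p$-adic Gamma manipulations depend on the common denominator $M$ of the $r_i$'s; one also has to split primes by their residue class modulo $M$ so that the dash operation of Definition \ref{def:dash} acts in a controlled fashion on $\alpha$. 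Verifying that these case distinctions align with the explicit $f_\alpha$ predicted by the modular form labels is the most labor-intensive portion of the argument.
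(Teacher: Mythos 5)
Your overall architecture (finite hypergeometric character sums, modularity of the associated rank-two piece, then a $p$-adic perturbation to gain extra powers of $p$) matches the route of \cite{LTYZ} in outline, but two of your load-bearing steps would fail as stated. First, Dwork's congruence \eqref{eq:Dwork} together with the unit-root relation \eqref{eq:unit} gives only the modulo $p$ statement at truncation $p-1$: for $s=1$ these are congruences modulo $p^1$, and this is exactly the commutative-formal-group-law bound that the present paper contrasts with the notion of supercongruence. So your intermediate claim that this machinery ``delivers the target congruence modulo $p^2$'' is wrong, and, more seriously, your mechanism for the final upgrade --- deforming the datum by a continuous parameter $t$ and applying Dwork's congruence to the deformed datum --- cannot produce a modulus better than $p^s$, since Dwork's theorem is itself only a mod $p^s$ statement no matter what datum it is applied to. The actual argument is different in kind: one uses Gross--Koblitz to write $H_p(\alpha,\beta;1)$ \emph{exactly} in terms of $p$-adic Gamma values, Taylor-expands $\Gamma_p$ at the rational parameters to split the sum into the major term $F(\alpha,\beta;1)_{p-1}-a_p(f_\alpha)$ plus error terms graded by powers of $p$ and weighted by harmonic-type sums, and then proves those error terms vanish modulo $p^2$ using identities obtained from a residue-sum technique as in \cite{Zudilin,OSZ}; the ``perturbation'' of \cite{Long,LR} shifts parameters by multiples of $p$ precisely so that exact classical evaluation formulas still hold for the shifted parameters, not by a formal deformation variable $t$.

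Second, your identification of $f_\alpha$ in the non-CM cases ``by matching a few Fourier coefficients with numerically computed Frobenius traces'' is not a proof; one needs either the modularity of the two-dimensional odd Galois representation attached to the associated rigid Calabi--Yau threefold or an effective Faltings--Serre comparison to pin down $f_\alpha$ rigorously. A smaller point: the Whipple very-well-poised ${}_7F_6(1)\to{}_4F_3(1)$ evaluation is not the device that converts the truncated series into $H_p(\alpha,\beta;1)$ --- that conversion is carried out by Gross--Koblitz and the reflection formula alone, as in Theorem \ref{lem:0} and Theorem \ref{lem:1}; in the present paper Whipple's formula plays the separate role of predicting how a degree-six character sum decomposes, which is what generates the new ${}_6F_5$ conjectures rather than proving Theorem \ref{thm:LTYZ}.
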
 \footnote{In fact the proof of this theorem implies for any $m\ge 1$, $F(\alpha,\beta;1)_{mp-1}\equiv a_p(f_\alpha)F(\alpha,\beta;1)_{m-1} \mod p^3$.}

This statement resembles the congruence \eqref{eq:unit} when both $m=s=1$ and $p$ being ordinary.  In fact reducing the statement of Theorem \ref{thm:LTYZ}  modulo $p$, rather than modulo $p^3$, can be shown using the theory of commutative formal group law. Any congruence that is stronger than what can be predicted by the commutative formal group law is called a \emph{supercongruence}. Supercongruences often reveal additional background symmetries such as complex multiplication. For instance, the background setting of Theorem \ref{thm:LTYZ} involves  Calabi-Yau threefolds defined over $\Q$ which are rigid in the sense that their $h^{2,1}$ hodge numbers equal 0. The power $p^3$ is somewhat expected in terms of the Atkin and Swinnerton-Dyer (ASD) congruences satisfied by the coefficients of weight-$k$ modular forms for noncongruence subgroups \cite{ASD,LL2,Scholl}. Two-term ASD congruences for the coefficients of weight-$k$ modular forms can be written the same way as Equation \eqref{eq:Dwork} with $p^s$ being replaced by $p^{(k-1)s}$.

The proof of Theorem \ref{thm:LTYZ} uses the finite hypergeometric functions (see Definition \ref{defn:Hp} below) originated in Katz's work \cite{Katz} and modified by McCarthy \cite{McCarthy-well} and
a $p$-adic perturbation method, which was used in  \cite{Long} and  described explicitly in \cite{LR} by the author and  Ramakrishna. The method was also used in other papers, we list a few here: \cite{Swisher} by Swisher, \cite{Liu} by Liu, \cite{He} by He, \cite{BS} by Barman and Saikia, and \cite{MP} by Mao and Pan. Essentially one regroups the the corresponding character sum  into a major term, that is $F(\alpha,\beta;1)_{p-1}-a_p(f_\alpha)$, and graded error terms. There are different ways to deal with the error terms. In \cite{LTYZ}, they are shown to be zero modulo $p^2$ using identities constructed from a residue-sum technique, which was  also used  in \cite{OSZ, Zudilin}. This approach may settle some new cases here. See \cite{MOS} by McCarthy, Osburn, Straub for other front of supercongruence results. See  \cite{Doran} by Doran et al. for writing local zeta functions of certain K3 surfaces using finite hypergeometric functions.

\medskip

It is conjectured by Mortenson (stated in \cite{FOP} by Frechette, Ono, Papanikolas) that for all primes $p>2$
\begin{equation}\label{eq:Mortenson}
\pFq{6}{5}{\frac 12&\frac 12&\frac 12&\frac 12&\frac 12&\frac 12}{&1&1&1&1&1}{1}_{p-1}\equiv a_p(f_{8.6.a.a}) \mod p^5.
\end{equation}The mod $p^3$ claim has been obtained by Osburn, Straub, Zudilin \cite{OSZ}. In \cite{BD}, Beukers and Delaygue proved that for each positive integer $n$, $\alpha=\{\frac 12,\cdots,\frac12\}, \beta=\{1,\cdots,1\}$ of size $n$,  a modulo $p^2$ supercongruence of the form \eqref{eq:unit} holds for $F(\alpha,\beta; (-1)^{n(p-1)/2})_{p-1}$  when $p$ is ordinary. Their proof uses the properties of hypergeometric differential equations. Further they conjectured that the corresponding Dwork type congruence \eqref{eq:Dwork} holds modulo $p^{2s}$ instead of $p^s$. See Conjecture 1.5 of \cite{BD}.

\medskip

In \cite{RR},  Roberts and Rodriguez-Villegas made a more general hypergeometric supercongruence conjecture, stated below in our notation, corresponding to hypergeometric motive defined over $\Q$ (see  \cite{RV-2} by  Rodriguez-Villegas and \cite{Roberts} by Roberts) such that $\beta$ consists of all 1 and $\l =\pm 1$. Their conjecture summarizes the pattern of Theorem \ref{thm:LTYZ} and the supercongruence \eqref{eq:Mortenson}.

\begin{conj}[Roberts and Rodriguez-Villegas \cite{RR}]Let $\alpha=\{\a_1,\cdots,a_n\}$, $\beta=\{1,\cdots,1\}$ be multisets satisfying condition ($\diamond$), $\l=\pm 1$ (note that by Remark \ref{rem:1}, $\alpha'=\alpha$). Let $A$ be the unique submotive of the hypergeometric motive corresponding to $\{\alpha,\beta;\l\}$ with hodge number $h^{0,n-1}(A)=1$ and $r$ the smallest positive integer such that $h^{r,n-1-r}(A)=1$. For any $p\nmid M(\alpha,\beta;\l)$ and ordinary for $\{\alpha,\beta;\l\}$, there is a $p$-adic unit $\mu_{\alpha,\beta;\l,p}$ depending on the hypergeometric datum such that for any integer $s\ge 1$
$$F(\alpha,\beta;\l)_{p^s-1}/F(\alpha,\beta;\l)_{p^{s-1}-1}\equiv \mu_{\alpha,\beta;\l,p} \mod p^{rs}.$$
\end{conj} Comparing with Dwork's congruence \eqref{eq:unit}, this is a refinement and the degree of the supercongruence, is given in terms of the gap between two hodge numbers of  $A$. Our main focus here is to investigate the supercongruence phenomenon when the  assumption of $\beta=\{1,\cdots,1\}$ being relaxed. We shall see that  a slight adjustment to  the hypergeometric parameters for the truncated series might be needed in the statement of the generic congruence, which is Theorem \ref{lem:1}. It is based on the definition of finite hypergeometricc sum and two technical Lemmas dealing with the discrepancy among the orders of $p$ appearing in Gauss sums and the corresponding rising factorials $(a)_k$. Our numeric computation further confirms that the parameter adjustment is needed for some numeric supercongruences listed in later part of this paper.\medskip

Based on Roberts and Rodriguez-Villegas' philosophy, supercongruences occur to hypergeometric motives with gaps in hodge number sequences. For this reason, we consider a few hypergeometric motives  defined over $\Q$  that are decomposable.  To look for supercongruence candidates, the strategy we use is based on the Galois perspective of classical hypergeometric functions outlined in \cite{Katz} by Katz, \cite{BCM} by Beukers, Cohen and Mellit and in \cite{WIN3X} by the author jointly with Fuselier, Ramakrishna, Swisher and Tu.
\section{From finite hypergeometric functions to truncated hypergeometric series}

  Following \cite{BCM}, we use the following definition  which corresponds to the normalized hypergeometric function $_n\mathbb F_{n-1}$ in \S4.1 of \cite{WIN3X} modified from hypergeometric functions defined over finite fields in \cite{Greene} by Greene.

\begin{defn}\label{defn:Hp}
Let $\{\alpha,\beta;\l\}$ be a hypergeometric datum defined over $\Q$. For any finite field of size $q=p^e$ where $p\nmid M(\alpha,\beta;\l)$ such that $a_i(q-1), b_j(q-1)\in \Z$ for all $i,j$, the finite hypergeometric function corresponding to $\{\alpha,\beta;\lambda\}$ over $\F_p$ is defined to be
\begin{equation}\label{eq:Hp}
H_q(\alpha,\beta;\lambda):=\frac 1{1-q}\sum_{k=0}^{q-2} \prod_{i=1}^n\frac{g(\omega^{k+(q-1)a_i})g(\omega^{-k-(q-1)b_i})}{g(\omega^{(q-1)a_i})g(\omega^{-(q-1)b_i})}\omega^k((-1)^n\lambda),
\end{equation}where $\omega$ represents an order $q-1$ multiplicative character of $\F_q$, and in this article we use the Teichmuller character of $\F_q^\times\rightarrow \C_p^\times$; for any multiplicative character $\chi$ of $\F_q^\times$, $$g(\chi):=\sum_{x\in \F_q} \chi(x)\Psi(x)$$ stands for its Gauss sum with respect to a non trivial additive character $\Psi$ of $\F_q$
\end{defn}
 Using the multiplication formulas  for Gauss sums,  when $\alpha,\beta$ are defined over $\Q$, this character sum  \eqref{eq:Hp} can be formulated in another form to rid of the assumption on $a_i(q-1), b_j(q-1)\in \Z$. See Theorem 1.3 of \cite{BCM} for details. \medskip

Now we  recall the following function used in \cite{LTYZ}. Define $$\nu(a,x):=-\left \lfloor \frac{x-a}{p-1}\right \rfloor=\begin{cases}0& \text{ if $0<a\le x<p$}\\ 1 &\text{ if $0<x<a<p$}. \end{cases} $$

To relate the finite hypergeometric sum $H_p(\alpha,\beta;\l)$ defined in \eqref{eq:Hp} to the truncated hypergeometric functions, one uses the Gross-Koblitz formula \cite{GK} which says for integer $k$
\begin{equation*}%\label{eq:GK}
g(\omega^{-k})=-\pi_p^{[k]_0}\G_p\(\left \{\frac k{p-1}\right \}\),
\end{equation*}where $\omega$ is the Teichmuller character of $\F_p^\times$, $\G_p(\cdot)$ is the $p$-adic Gamma function, $\pi_p$ is a fixed root of $x^{p-1}+p=0$ in $\C_p$, the additive character for the Gauss sum is $\Psi(x):=\zeta_p^{\text{Tr}_{\F_p}^{\F_q}(x)}$ of $\F_q$ where $\zeta_p$ is a primitive $p$th root of unity which is congruent to $1+\pi_p$ modulo $\pi_p^2$.  Also for a real number $x$, $\{x\}$ stands for its fractional part.

\medskip

Following the analysis (as \cite[et al.]{AO00,Kilbourn,McCarthy-RV,LTYZ}), we use the Gross-Koblitz formula to rewrite \eqref{eq:Hp} into the following theorem. When  $\beta=\{1,\cdots,1\}$ the details of a proof is already given in Section 4 of \cite{LTYZ}. Another reference is Beukers' paper \cite{Beukers}.
\begin{theorem}\label{lem:0}Assume that $\alpha,\beta$ are two multi-sets satisfying condition $(\diamond)$. Then for any prime $p\nmid M(\alpha,\beta;\l)$
\[
H_p(\alpha,\beta;\l)=\frac{1}{1-p}\sum_{k=0}^{p-2}\prod_{i=1}^n\frac{\G_p\left (\left \{ a_i-\frac{k}{p-1}  \right \} \right )}{\G_p(a_i)}\frac{\G_p(1-\{b_i\})}{\G_p\left (  1-\left \{b_i+\frac k{p-1}\right  \} \right)} (-p)^{e_{\alpha,\beta}(k)}\omega(\l)^k,
\] where %$t=\#\{ i\mid 1\le i\le n, b_i=1\}$ and
$$e(k)=e_{\alpha,\beta}(k):=\sum_{i=1}^n -\left \lfloor a_i-\frac{k}{p-1}\right \rfloor-\left \lfloor \frac{k}{p-1}+\{b_i\}  \right \rfloor,$$ is a step function of $k$ when $k$ is viewed as a variable varying  from $0$ to $p-1$. The jumps  only happen at values $(p-1)a_i$ or $(p-1)(1-\{b_j\})$.
\end{theorem}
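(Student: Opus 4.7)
The plan is to unpack the definition \eqref{eq:Hp} of $H_p(\alpha,\beta;\l)$ by applying the Gross--Koblitz formula to each of its four Gauss sums, then to rewrite the resulting $\G_p$-arguments using the reflection identity \eqref{eq:reflect} together with the symmetry of $\alpha$ under $a\mapsto 1-a$ forced by condition $(\diamond)$.

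Fix $k\in\{0,\dots,p-2\}$ and $i\in\{1,\dots,n\}$. Since $\omega$ has order $p-1$, each Gauss sum depends only on the exponent modulo $p-1$; reducing before invoking Gross--Koblitz converts the ``overflow'' into an extra factor $\pi_p^{(p-1)\cdot}=(-p)^{\cdot}$. I would apply this to each of $g(\omega^{k+(p-1)a_i})$, $g(\omega^{(p-1)a_i})$, $g(\omega^{-k-(p-1)b_i})$, and $g(\omega^{-(p-1)b_i})$, obtaining for each a clean expression of the form (sign)$\cdot\pi_p^{?}\cdot\G_p(?)\cdot(-p)^{?}$. In the ratio from \eqref{eq:Hp}, the $\pi_p^{\pm k}$ pieces cancel between the ``$a$-half'' and the ``$b$-half'', the $b$-half's $(-p)$ bookkeeping directly yields the summand $-\lfloor k/(p-1)+\{b_i\}\rfloor$ of $e_{\alpha,\beta}(k)$, and the $a$-half yields a summand that after the symmetrization below becomes $-\lfloor a_i-k/(p-1)\rfloor$.

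Next I would simplify the $\G_p$-arguments. On the $b$-side, reflection \eqref{eq:reflect} converts $\G_p(\{b_i+k/(p-1)\})/\G_p(\{b_i\})$ into $\G_p(1-\{b_i\})/\G_p(1-\{b_i+k/(p-1)\})$ at the cost of a sign $(-1)^{a_0(\{b_i+k/(p-1)\})-a_0(\{b_i\})}$; summing over $i$, the $k$-independent part produces exactly $(-1)^t$ with $t=\sum_i a_0(b_i)$, while the $k$-dependent residue combines with the $\omega^{(p-1)a_i}(-1)=(-1)^{(p-1)a_i}$ signs from the $a$-side (using $\omega(-1)=-1$) and the leading $(-1)$'s in Gross--Koblitz into the single factor $(-1)^{kn}$. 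For the $a$-side, direct Gross--Koblitz produces $\G_p(\{-a_i-k/(p-1)\})/\G_p(1-a_i)$ rather than the claimed $\G_p(\{a_i-k/(p-1)\})/\G_p(a_i)$; here $(\diamond)$ gives an involution $a\leftrightarrow 1-a$ on $\alpha$, and since $\{-a-k/(p-1)\}=\{(1-a)-k/(p-1)\}$ (the two differ by an integer), relabeling inside $\prod_i$ converts the former into the latter and simultaneously $\prod_i\G_p(1-a_i)$ into $\prod_i\G_p(a_i)$. The same relabeling turns the $a$-side $(-p)$-exponent $\sum_i\lfloor a_i+k/(p-1)\rfloor$ into $\sum_i(-\lfloor a_i-k/(p-1)\rfloor)$, completing the match.

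The main obstacle is keeping track of the $(-p)^{e(k)}$ bookkeeping at the jump points $k=(p-1)a_i$ and $k=(p-1)(1-\{b_j\})$ of $e_{\alpha,\beta}(k)$: at such $k$ one of the Gauss sum exponents is an exact multiple of $p-1$, so the naive Gross--Koblitz formula is off by a factor of $(-p)$ that must be corrected by hand, and this correction is precisely what distinguishes the step function $-\lfloor a_i-k/(p-1)\rfloor$ from its shifted cousin $\lfloor a_i+k/(p-1)\rfloor$. One must also verify that the case $b_i=1$, where $\{b_i\}=0$ but $(p-1)b_i=p-1$ already triggers a wrap modulo $p-1$, is absorbed correctly into the $a_0(b_i)=1$ contribution to $t$. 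Since essentially this algebra has been executed in Section~4 of \cite{LTYZ} (and stated there as Proposition~3 in the specialization $\beta=\{1,\dots,1\}$) and appears in similar form in Beukers' paper \cite{Beukers}, the remaining work is to check that their bookkeeping extends to general $\beta$ satisfying $(\diamond)$.
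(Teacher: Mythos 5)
Your outline --- Gross--Koblitz applied to each of the four Gauss sums in \eqref{eq:Hp}, with the reduction of exponents modulo $p-1$ producing the overflow factor $(-p)^{e_{\alpha,\beta}(k)}$, the reflection formula \eqref{eq:reflect} on the $b$-side producing $(-1)^t$ together with the $k$-dependent signs, and the involution $a_i\leftrightarrow 1-a_i$ of $\alpha$ guaranteed by $(\diamond)$ to convert the directly obtained $\G_p\left(\left\{-a_i-\frac{k}{p-1}\right\}\right)/\G_p(1-a_i)$ into the stated $\G_p\left(\left\{a_i-\frac{k}{p-1}\right\}\right)/\G_p(a_i)$ --- is precisely the computation the paper intends, and the paper likewise does not write it out, deferring to Section 4 of \cite{LTYZ} and to \cite{Beukers} while recording only the reflection step and the relation $e(k)=\Lambda(p-1-k)$. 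The one imprecise point is your attribution of part of the factor $(-1)^{kn}$ to ``$\omega^{(p-1)a_i}(-1)$'' signs, which are independent of $k$ and so cannot contribute to it; that factor is supplied by the $k$-dependence of the residues $a_0(\cdot)$ arising in the reflection step, but this does not affect the structure of the argument.
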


Comparing with Beukers' paper \cite{Beukers}, the function $e(k)=\Lambda(p-1-k)$ where $\Lambda(k)$ is given in Definition 1.4 of \cite{Beukers}. The discrepancy is caused by the choice of the generator for the group of multiplicative characters of $\F_p^\times$. In addition we apply the reflection formula for $\G_p(\cdot)$   to get a rising factorial of $b_i$ in the denominator, as stated  in Theorem \ref{lem:1} below.
%The signs $(-1)^{a_0(b_i)}$ combined gives rise to the additional term $(-1)^t$.

We further define some terms here which are useful for our discussion.

\begin{defn}For any given $\alpha,\beta$ defined over $\Q$, define
\begin{equation*}%\label{eq:s}
s(\alpha,\beta):=\min\{e(k)\mid 0\le k<p-1\};\end{equation*} and the bottom interval as
\begin{equation*}
I(\alpha,\beta):=\{0\le x<p-1 \mid e(x)=s(\alpha,\beta)\}
\end{equation*} and the \emph{weight} function as the difference between the largest and smallest $e(k)$ values
\begin{equation*}%\label{eq:w}
w(\alpha,\beta):=\max\{e(k)\mid 0\le k\le p-2\}-\min\{e(k)\mid 0\le k< p-1\}.\end{equation*}
\end{defn}

Below we only consider $\l=\pm 1$.
By  the way we define $s(\alpha,\beta)$ and Theorem \ref{lem:0},  $p^{\max\{0,-s(\alpha,\beta)\}}H_p(\alpha,\beta;\l)$ is $p$-adically integral. Also the value of $p^{\max\{0,-s(\alpha,\beta)\}}H_p(\alpha,\beta;\l)$ modulo $p$ only relies on the contribution from the $k$th terms where $k$ is in the bottom interval $I(\alpha,\beta)$. Namely
\begin{multline}\label{eq:I}p^{\max\{0,-s(\alpha,\beta)\}}H_p(\alpha,\beta;\pm 1) \\ \equiv p^{\max\{0,-s(\alpha,\beta)\}}\sum_{k, e(k)=s(\alpha,\beta)} \prod_{i=1}^n\frac{\G_p\left (\left \{ a_i-\frac{k}{p-1}  \right \} \right )}{\G_p(a_i)}\frac{\G_p(1-\{b_i\})}{\G_p\left (1- \left \{ b_i+\frac k{p-1}\right  \} \right)}(-p)^{e_{\alpha,\beta}(k)}(\pm 1)^{k}\\ \mod p.\end{multline}
\medskip

The next lemmas are useful for the proof of our main Theorem \ref{lem:1}.
\begin{lemma}\label{lem:nu-value}Let $p$ be a fixed prime. Then
for any $a\in \Q\cap (0,1)\cap \Z_p^\times$, non negative integer $ k<p$ $$\nu(k,(p-1)a')=\nu(k,[-a-\nu(k,(p-1)a)]_0),$$ where $a'$ is the image of $a$ under the dash operation as before.
\end{lemma}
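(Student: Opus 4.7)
The plan is to unfold the dash operation arithmetically and reduce the identity to an elementary comparison of real numbers. The key input is
\[
(p-1)a'=[-a]_0+(a-a'),
\]
or equivalently $(p-1)a-[-a]_0=p(a-a')$, both of which follow immediately from $a'p=a+[-a]_0$. Since $a,a'\in\Q\cap(0,1)$ (the containment $a'\in(0,1)$ being a consequence of $0<a+[-a]_0<p$), the difference $a-a'$ lies in the open interval $(-1,1)$, which pins $(p-1)a'$ down to within distance $1$ of the integer $[-a]_0$. More precisely, when $a'\le a$ (equivalently $[-a]_0\le(p-1)a$) one has $(p-1)a'\in[\,[-a]_0,\,[-a]_0+1)$, and when $a'>a$ one has $(p-1)a'\in([-a]_0-1,\,[-a]_0)$. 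On the other side of the desired identity, $a\in\Z_p^\times$ forces $[-a]_0\in\{1,\dots,p-1\}$, so $[-a-j]_0=[-a]_0-j$ without wrap-around for $j\in\{0,1\}$.

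I would then set $j:=\nu(k,(p-1)a)\in\{0,1\}$ and split into the four sub-cases obtained by combining the two choices of $j$ with the two cases $[-a]_0\le(p-1)a$ and $[-a]_0>(p-1)a$. Each sub-case reduces to a one-line inequality check comparing the integer $k$ with $[-a]_0$ and with $(p-1)a'$. For example, in the sub-case $[-a]_0\le(p-1)a$ with $[-a]_0<k\le(p-1)a$, one has $j=0$ and $k\ge[-a]_0+1>(p-1)a'$, so both $\nu(k,[-a]_0)$ and $\nu(k,(p-1)a')$ equal $1$; in the sub-case $[-a]_0>(p-1)a$ with $(p-1)a<k\le[-a]_0-1$, one has $j=1$ and $k\le[-a]_0-1<(p-1)a'$, so both $\nu(k,[-a]_0-1)$ and $\nu(k,(p-1)a')$ equal $0$. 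The remaining sub-cases are completely analogous, and the degenerate case $a=a'$, which occurs precisely when $(p-1)a$ coincides with $[-a]_0$, is absorbed by either endpoint of the above interval description.

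The only delicate point — and really the sole obstacle — is that $k$ is an integer whereas $(p-1)a$ and $(p-1)a'$ are in general rational numbers with denominators coprime to $p$, so a strict inequality like $k>(p-1)a$ has to be upgraded to $k\ge\lceil(p-1)a\rceil$ before being matched against the integer $[-a]_0$. Because $(p-1)a'$ is trapped in a half-open interval of length $1$ with $[-a]_0$ as an endpoint, this integer-versus-real bookkeeping is exactly what forces the two $\nu$-values to coincide in every sub-case, and no further input is needed.
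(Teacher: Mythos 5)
Your proposal is correct and takes essentially the same route as the paper's own proof: both rest on the identity $(p-1)a'=[-a]_0+(a-a')$ coming from $pa'=a+[-a]_0$, the bound $|a-a'|<1$, and an integer-versus-rational comparison of $k$ against $[-a]_0$. Your four-sub-case bookkeeping is in fact slightly more explicit than the paper's two-case split on $k\le(p-1)a'$ versus $k>(p-1)a'$, which tacitly assumes $\nu(k,(p-1)a)=1$ in the latter case; your organization makes that implicit step visible.
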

\begin{proof}Recall that $a+[-a]_0=pa'$ and both $a,a'$ are in $(0,1)$ by our assumption.
If $k\le (p-1)a'$, i.e. $k- (pa'-a+a-a')=k-[-a]_0-(a-a')\le 0$, then $k\le [-a]_0$ as the difference is an integer less than $|a-a'|<1$. Thus the values on both sides are 0.

If $k>(p-1)a'$, then $k-[-a-\nu(k,(p-1)a)]_0=k-[-a-1]_0=k+1-[-a]_0=k+1-pa'+a>0$. Thus both  values are 1.
\end{proof}

To use Theorem \ref{lem:0}   to link $H_p(\alpha,\beta;\pm 1)$ to a truncated hypergeometric series, the
 next Lemma will be helpful. It is mainly about converting the $p$-adic unit part of $g(\omega^k+(p-1)a)/g(\omega^{(p-1)a})$ to a rising factorial.
\begin{lemma}\label{lem:nu}Let $p>2$ be a fixed prime.
For $a\in \Q\cap [0,1)\cap \Z_p^\times$, non negative integer $ k<p-1$, modul $p$
\begin{eqnarray*} \G_p \left (\left \{a- \frac{k}{p-1} \right\}\right )/\G_p(a)&\equiv& \begin{cases}
C\cdot (a)_k  \quad \text{if}  \quad k\le (p-1)a \\C\cdot (a+1)_k \quad \text{if}\quad (p-1)a<k<p-1
\end{cases} \\
&= & C (a+\nu(k,(p-1)a))_k ,\end{eqnarray*}
where $\displaystyle C= \frac{(-1)^k}{(-p)^{\nu(k,(p-1)a)}}\frac{(ap)^{\nu(k,(p-1)a)}}{(a'p)^{\nu(k,(p-1)a')}}$.

\end{lemma}
\begin{proof}We will prove the equivalent form \begin{multline*}(-p)^{\nu(k,(p-1)a)} \G_p \left (\left \{a- \frac{k}{p-1} \right\}\right )/\G_p(a) \\ \equiv(-1)^k\frac{(ap)^{\nu(k,(p-1)a)}}{(a'p)^{\nu(k,(p-1)a')}}(a+\nu(k,(p-1)a))_k \mod p^{1+\nu(k,(p-1)a)}.
\end{multline*}

If $k\le (p-1)a$, $\nu(k,(p-1)a)=0$. Thus $$(-p)^{\nu(k,(p-1)a)} \G_p \left (\left \{a- \frac{k}{p-1} \right\}\right )/\G_p(a)=\G_p\(a-\frac k{p-1}\)/\G_p(a)\equiv \G_p(a+k)/\G_p(a) \mod p.$$

If $k> (p-1)a$, $\nu(k,(p-1)a)=1$. Thus
\begin{multline*}(-p)^{\nu(k,(p-1)a)} \G_p \left (\left \{a- \frac{k}{p-1} \right\}\right )/ \G_p(a)\\=(-p)\G_p\(1+a-\frac k{p-1}\)/\G_p(a)\equiv (-p)\G_p(1+a+k)/\G_p(a) \mod p^2.
\end{multline*}As $\G_p(a+1)=(-a)\G_p(a)$ for $a\in \Z_p^\times$. $(-p)\G_p(1+a+k)/\G_p(a)=(ap)\G_p(1+a+k)/\G_p(1+a)$.

Note that for any $0\le k<p$, and $c\in \Z_p^\times$ if $k\le [-c]_0$, $\G_p(c+k)/\G_p(1+c)=(-1)^k(c)_k$ and when $k> [-a]_0$, $\G_p(c+k)/\G_p(c)=(-1)^k(c'p)^{-1}(c)_k$ as among $c,c+1,\cdots,c+k-1$ one of them is a multiple of $p$, which equals $c+[-c]_0=c'p$ by the definition of the dash operation. Putting together we have

\begin{multline*}(-p)^{\nu(k,(p-1)a)} \G_p \left (\left \{a- \frac{k}{p-1} \right\}\right )/\G_p(a) \\ \equiv (ap)^{\nu(k,(p-1)a)}  \G_p(a+k+{\nu(k,(p-1)a)} )/\G_p(a+{\nu(k,(p-1)a)} ) \mod p^{1+\nu(k,(p-1)a)} \\=(-1)^k\frac{(ap)^{\nu(k,(p-1)a)}}{(a'p)^{\nu(k,[-a-\nu(k,(p-1)a)]_0)}}(a+\nu(k,(p-1)a))_k \mod p^{1+\nu(k,(p-1)a)}.
\\=(-1)^k\frac{(ap)^{\nu(k,(p-1)a)}}{(a'p)^{\nu(k,(p-1)a')}}(a+\nu(k,(p-1)a))_k \mod p^{1+\nu(k,(p-1)a)}.
\end{multline*}The last claim follows from Lemma \ref{lem:nu-value}.
\end{proof}

\begin{theorem}\label{lem:1}Assume that
 $\alpha,\beta$ are two multi-sets satisfying the $\diamond$ condition  such that

 1) $s(\alpha,\beta)<0$;

 2) $I(\alpha,\beta) $ is connected.

 Let $\l\in \Q^\times$ and p be any odd prime  not dividing $M(\alpha,\beta;\l)$, then

\begin{equation}\label{eq:generic}p^{-s(\alpha,\beta)}H_p(\alpha,\beta;\l)\equiv p^{-s(\alpha,\beta)} \cdot F(\hat
\alpha, \breve {\beta}; \l)_{p-1} \mod p,\end{equation}where  $\hat \alpha=\{\hat a_1,\cdots, \hat a_n\}$, $\breve{\beta}=\{\breve b_1,\cdots, \breve b_n\}$ with $\hat a_i=a_i+\nu(k,(p-1)a_i)$ and $\breve b_i=b_i+\left \lfloor \frac{k}{p-1}+\{1-b_i\}  \right \rfloor$ for any integer $k\in I(\alpha,\beta)$.

%\begin{cases}1+b_i&\text { if } 0<b_i<\frac 12\\ b_j& \text{ otherwise}  \end{cases},$$
\end{theorem}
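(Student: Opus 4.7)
The plan is to begin from Theorem \ref{lem:0}, reduce modulo $p$ to the subsum indexed by $I(\alpha,\beta)$ via \eqref{eq:I}, and convert each surviving $\G_p$-summand into the $k$-th term of the truncated hypergeometric series using Lemma \ref{lem:nu} and the reflection formula \eqref{eq:reflect}. For $\lambda=\pm 1$ and odd $p$, the character factor simplifies to $(-1)^{kn}\omega((-1)^n(\pm 1))^k=(\pm 1)^k$ because $\omega(\pm 1)=\pm 1$. Multiplying both sides of Theorem \ref{lem:0} by $p^{-s(\alpha,\beta)}$ and reducing modulo $p$, each summand with $e(k)>s(\alpha,\beta)$ picks up at least one extra factor of $p$ from $(-p)^{e(k)-s}$ and drops out, leaving only $k\in I(\alpha,\beta)$; the prefactor $1/(1-p)$ collapses to $1$, and the residual $p^{-s}(-p)^{s}$ to $(-1)^{s(\alpha,\beta)}$.

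The hypothesis that $I(\alpha,\beta)$ is connected, combined with condition $(\diamond)$, forces every individual jump function entering $e(k)$ to be constant on $I$. Indeed, the jumps of $\nu(k,(p-1)a_i)$ (at $(p-1)a_i$) and of $-\lfloor k/(p-1)+\{b_i\}\rfloor$ (at $(p-1)(1-\{b_i\})$) can only cancel in $e(k)$ when $a_i=1-b_j$ for some $j$; but since $\beta$ is $\Q$-defined, $1-b_j$ also lies in $\beta$ as some $b_{j'}$, giving $a_i-b_{j'}\in\Z$, contradicting $(\diamond)$. So no cancellations occur, and connectedness of $I$ forces each contributing function to be constant there. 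The analogous observation — that $\beta$ is closed under $b\mapsto 1-b$ for $b<1$, so the jump points $(p-1)b_i$ of the shift $\lfloor k/(p-1)+\{1-b_i\}\rfloor$ coincide with jump points $(p-1)(1-b_{\sigma(i)})$ of $e(k)$ for the involution $b_{\sigma(i)}=1-b_i$ — shows that $\breve b_i$ is also constant on $I$. Hence $\hat\alpha$ and $\breve\beta$ are well-defined.

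I then convert each summand. Lemma \ref{lem:nu} applied to the $a_i$-factors yields
\begin{equation*}
(-p)^{\nu_i}\G_p(\{a_i-k/(p-1)\})/\G_p(a_i) \equiv (-1)^k\,\frac{(a_ip)^{\nu_i}}{(a_i'p)^{\nu_i'}}(\hat a_i)_k \pmod{p^{1+\nu_i}}.
\end{equation*}
For the $b_i$-factors I apply the reflection formula \eqref{eq:reflect} as sketched in the paragraph after Theorem \ref{lem:0}, extracting the signs $(-1)^{a_0(b_i)}$ that multiply to $(-1)^t$, and reducing the task to $\G_p(\{b_i+k/(p-1)\})/\G_p(\{b_i\})$; this is handled by continuity of $\G_p$ and the recursion $\G_p(x+1)=-x\G_p(x)$ (equivalently, a second application of Lemma \ref{lem:nu} with parameter $1-b_i$), producing a Pochhammer $(1-b_i)_{k+\mu_i}$ in the denominator up to a unit $(b_ip)^{\mu_i}/(b_i'p)^{\mu_i'}$. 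Taking the product over $i$, the involution $b\mapsto 1-b$ acting on $\beta$ reorganizes $\prod_i(1-b_i)_{k+\mu_i}$ into $\prod_i(\breve b_i)_k$, the $(-1)^k$'s from the $a$- and $b$-sides pair to $(-1)^{2nk}=1$, and the residual unit factors telescope, yielding
\begin{equation*}
(-1)^t p^{-s(\alpha,\beta)} H_p(\alpha,\beta;\pm 1) \equiv (-1)^{s(\alpha,\beta)}\sum_{k\in I(\alpha,\beta)} \frac{\prod_i(\hat a_i)_k}{\prod_i(\breve b_i)_k}(\pm 1)^k \pmod p.
\end{equation*}

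The main obstacle, in my view, is completing this partial sum to the full truncated series $p^{-s(\alpha,\beta)}F(\hat\alpha,\breve\beta;\pm 1)_{p-1}$. Concretely, one must verify that for every $k\in\{0,\ldots,p-1\}\setminus I(\alpha,\beta)$ the Pochhammer ratio $\prod_i(\hat a_i)_k/\prod_i(\breve b_i)_k$ has $p$-adic valuation at least $e(k)-s(\alpha,\beta)\ge 1$. Intuitively, each crossing by $k$ of a jump point $(p-1)a_j$ or $(p-1)(1-b_j)$ introduces a factor of $p$ into a numerator $(\hat a_j)_k$ (when the factor $a_j+k-1$ hits $pa_j'$) or removes one from a denominator $(\breve b_j)_k$ (by the analogous mechanism), in exact correspondence with the step structure of $e(k)-s(\alpha,\beta)$. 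The careful enumeration of these valuations, together with the matching of the residual sign $(-1)^{s(\alpha,\beta)}$, is where the bulk of the technical work will lie.
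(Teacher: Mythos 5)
Your proposal follows the same route as the paper's own proof: reduce to the bottom interval via \eqref{eq:I}, then convert each surviving summand using Lemma \ref{lem:nu} for the $\alpha$-part and the reflection formula together with a second application of Lemma \ref{lem:nu} (split according to whether $k=(p-1)(1-b_i)$ lies in $I(\alpha,\beta)$) for the $\beta$-part. The ``main obstacle'' you flag at the end --- verifying that the terms of $p^{-s(\alpha,\beta)}F(\hat\alpha,\breve\beta;\pm 1)_{p-1}$ with $k\notin I(\alpha,\beta)$ vanish modulo $p$ --- is also left implicit in the paper, which simply identifies the $k\in I(\alpha,\beta)$ summands of $H_p$ with the corresponding terms of the truncated series; your valuation bookkeeping is the right way to close that step, and your no-cancellation argument for the well-definedness of $\hat\alpha$ and $\breve\beta$ makes explicit something the paper only asserts.
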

\begin{proof}Note that $\omega(\l)=\l \mod p$.
Under the assumption that $I(\alpha,\beta)$ is connected, the values of $\hat a_i$ and $\breve b_j$ are independent of the choice of $k$ within the interval $I(\alpha,\beta)$. By \eqref{eq:I}, in the formula $H_p(\alpha,\beta;\pm 1)$ in Theorem \ref{lem:0}, we only need to consider $k\in I(\alpha,\beta)$. In this formula we separate
$$ \prod_{i=1}^n\frac{\G_p\left (\left \{ a_i-\frac{k}{p-1}  \right \} \right )}{\G_p(a_i)}\frac{\G_p(1-\{b_i\})}{\G_p\left (   1-\left \{b_i+\frac k{p-1}\right  \} \right)} (-p)^{e_{\alpha,\beta}}$$ into a product of two parts, one involves
$$\frac{\G_p\left (\left \{ a_i-\frac{k}{p-1}  \right \} \right )}{\G_p(a_i)}(-p)^{ -\left \lfloor a_i-\frac{k}{p-1}\right \rfloor}=\frac{\G_p\left (\left \{ a_i-\frac{k}{p-1}  \right \} \right )}{\G_p(a_i)}(-p)^{\nu(k,(p-1)a_i)},$$ to which one can apply use Lemma \ref{lem:nu} directly  to get
$$\prod_{i=1}^n\frac{\G_p\left (\left \{ a_i-\frac{k}{p-1}  \right \} \right )}{\G_p(a_i)}(-p)^{ -\left \lfloor a_i-\frac{k}{p-1}\right \rfloor}=(-1)^{nk}\prod_{i=1}^n(\hat a_i)_k \mod p^{1+\sum_{i=1}^n\nu(k,(p-1)a_i)},$$ as the set $\alpha=\{a_1,\cdots,a_n\}$ is closed under the dash operation under our assumption.

The other part involves
$$\frac{\G_p(1-\{b_i\})}{\G_p\left (   1-\left \{b_i+\frac k{p-1}\right  \} \right)}(-p)^{-\left \lfloor \frac{k}{p-1}+\{b_i\}  \right \rfloor},$$ which will be divided into two cases.

Firstly,  we assume there is an integer  $k=(p-1)(1-b_i)$ within $I(\alpha,\beta)$ for some $b_i\notin \Z$ and hence $1-b_i\in \beta$. Thus $(1-b_i)'=1-b_i$ in this case. Note that the multiset  obtain from removing all $1$ and $1-b_i$ from $\beta$ is still close under the dash operation. In this case $\left \lfloor \frac{k}{p-1}+b_i  \right \rfloor=1$ and the reciprocal of the above is

\begin{multline*}(-p)^{\left \lfloor \frac{k}{p-1}+\{b_i\}  \right \rfloor}\G_p\( 1-\left \{b_i+ \frac{k}{p-1} \right\}\)/\G_p(1-\{b_i\})=(-p)\G_p(1)/\G_p(1-b_i)=p\G_p(0
)/\G_p(1-b_i)\\ \equiv p \G_p(1-b_i+k)/\G_p(1-b_i)= (1-b_i)p\G_p(2-b_i+k)/\G_p(2-b_i) \mod p^2,\end{multline*}where $(1-b_i)p\G_p(2-b_i+k)/\G_p(2-b_i)=(-1)^k(2-b_i)_k$

For other $k$ in $I(\alpha,\beta)$ which is not of the form $(p-1)(1-b_i)$ and $b_i\notin \Z$, $\left \lfloor \frac{k}{p-1}+\{b_i\}  \right \rfloor=\nu(k,(p-1)(1-\{b_i\}))$ and $ 1-\left \{b_i+\frac k{p-1}\right  \} = \left \{1-\{b_i\}-\frac k{p-1}\right  \}$ when $0\le k<p-1$ and $k\neq (p-1)(1-\{b_i\})$. Thus one can also apply Lemma \ref{lem:nu} as $$\frac{\G_p(1-\{b_i\})}{\G_p\left (   1-\left \{b_i+\frac k{p-1}\right  \} \right)}(-p)^{-\left \lfloor \frac{k}{p-1}+\{b_i\}  \right \rfloor}=(-p)^{-\nu(k,(p-1)(1-\{b_i\}))}\frac{\G_p(1-\{b_i\})}{\G_p\left (  \left \{ 1- \{b_i\} -\frac k{p-1}\right  \} \right)} .$$ Similar to the case dealing with $a_i$, they contribute $(-1)^k(\breve b_i)_k$ in the denominators.
\end{proof}

Theorem \ref{lem:1} is a generic congruence, could be compared with Theorem \ref{thm:Dwork} in the ordinary case when $m=s=1$. We  now give two examples, and state a corresponding numeric supercongruence  modulo $p^5$.

\begin{example}\label{eg:1}
Let $\alpha=\{\frac12,\frac12,\frac12,\frac12,\frac13,\frac 23\}$, $\beta=\{1,1,1,1,\frac16,\frac 56\}$,  then $s(\alpha,\beta)=-1$, $w(\alpha,\beta)=6$, and the bottom interval $I(\alpha,\beta)=\left [\frac{p-1}6,\frac{p-1}3\right ]$. Thus $\hat \alpha=\alpha$ and $\breve  \beta=\{1,1,1,1,\frac76,\frac 56\}$.  In this case Theorem \ref{lem:1} says each prime $p\nmid M(\alpha,\beta;\l)$ \begin{equation}\label{eq:6}p\cdot \,\pFq{6}{5}{\frac 12&\frac 12&\frac 12&\frac 12&\frac 13&\frac 23}{&1&1&1&\frac76&\frac56}{\l }_{p-1}\equiv p\cdot H_p(\alpha,\beta;\l) \mod p.
\end{equation} We plot the value of $e(pk)$ in the following graph with $k$ ranges from 0 to 1.
\medskip

\begin{center}
\includegraphics[scale=0.3,origin=c]{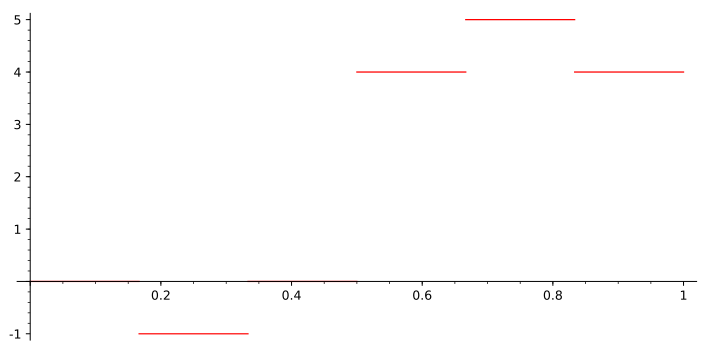}

{$e(pk)$ values for $\alpha=\{\frac12,\frac12,\frac12,\frac12,\frac13,\frac 23\}$, $\beta=\{1,1,1,1,\frac16,\frac 56\}$ }

\end{center}
\medskip

\end{example}

\begin{example}
 If  $\alpha=\{\frac12,\frac12,\frac16,\frac 56\}$, $\beta=\{1,1,\frac13,\frac 23\}$, then $s(\alpha,\beta)=0$, $w(\alpha,\beta)=2$,  and bottom interval being $I(\alpha,\beta)=\left[0,\frac{p-1}6\right ]\cup \left [\frac{p-1}3,\frac{p-1}2\right].$

 \medskip

 \begin{center}
\includegraphics[scale=0.3,origin=c]{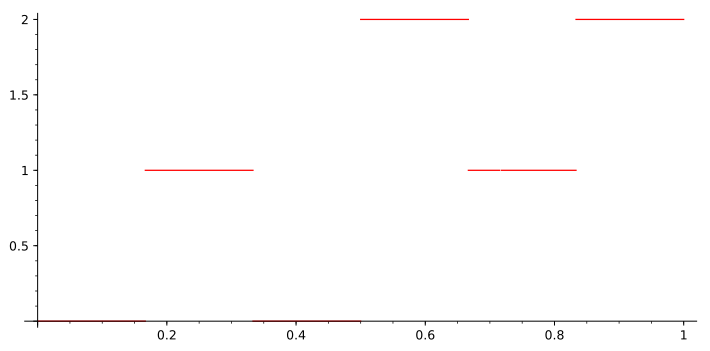}

{$e(pk)$ values for $\alpha=\{\frac12,\frac12,\frac16,\frac 56\}$, $\beta=\{1,1,\frac13,\frac 23\}$ }

\end{center}
\end{example}

\medskip

Now we turn our attention to supercongruences. Note \eqref{eq:6} is a generic modulo $p$ congruence which holds for $\l \in \Q^\times$. When $\l=\pm 1$, the congruence might be stronger.  One of them, listed below,  corresponds to Example \ref{eg:1} with $\l=1$.
\begin{conj}\label{conj:1}For each prime $p> 5$,

1). on the perspective of character sums
\begin{equation*}\label{eq:Hp-Conj1}p\cdot H_p\left (\left\{\frac12,\frac12,\frac12,\frac12,\frac13,\frac23\right\},\left \{1,1,1,1,\frac76,\frac56\right\};1\right)=\left ( \frac {2}p\right )a_p(f_{64.6.a.f})+\left ( \frac {-3}p\right )p\cdot a_p(f_{36.4.a.a})+\left (\frac{3}p\right )p^2;
\end{equation*}

2). on the perspective of supercongruences
\begin{equation*}\label{eq:1}
p\cdot \, \pFq{6}{5}{\frac 12&\frac 12&\frac 12&\frac 12&\frac 13&\frac 23}{&1&1&1&\frac76&\frac56}{1}_{p-1}\equiv \left ( \frac {2}p\right )a_p(f_{64.6.a.f}) \mod p^5,
\end{equation*}where $\left(\frac{\cdot}p\right)$ is the Legendre symbol.
\end{conj}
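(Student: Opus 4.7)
The plan is to adapt the strategy used in the proof of Theorem \ref{thm:LTYZ} in \cite{LTYZ}. The two parts of Conjecture \ref{conj:1} play complementary roles: Part 1) is an exact identification of $p \cdot H_p(\alpha,\beta;1)$ as a sum of traces of Frobenius on a decomposable motive, while Part 2) asserts that the truncated hypergeometric series captures only the weight-$6$ component modulo $p^5$.

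\textbf{Part 1).} First, use the Galois/étale perspective on hypergeometric motives from \cite{BCM, Katz, WIN3X} to decompose the hypergeometric motive $H(\alpha,\beta)$ at $\lambda=1$. The rank-$6$ hypergeometric local system should split over $\Q$ into three pieces of Hodge weights $5$, $3$, and $2$ (after Tate twist), whose $L$-factors at $p$ match respectively the weight-$6$ form $f_{64.6.1.f}$, the weight-$4$ form $f_{36.4.1.a}$ (with an extra factor of $p$), and a quadratic Dirichlet character contributing $p^2$. The quadratic twists $\left(\frac{2}{p}\right)$, $\left(\frac{-3}{p}\right)$, $\left(\frac{3}{p}\right)$ should emerge naturally from the characters appearing in the Gauss-sum expansion of $H_p$ in Definition \ref{defn:Hp} after separating the sum according to the three levels of the step function $e(k)$. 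The identity then follows by comparing Frobenius traces on the decomposed motive with $p \cdot H_p(\alpha,\beta;1)$ evaluated via \eqref{eq:Hp}.

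\textbf{Part 2).} Starting from the mod-$p$ congruence in Example \ref{eg:1}, the task is to upgrade it to modulo $p^5$ after discarding all but the weight-$6$ contribution from Part 1). The approach is the $p$-adic perturbation method of \cite{LR, LTYZ}: refine Lemma \ref{lem:nu} to higher order so that each Gamma ratio in Theorem \ref{lem:0} is expanded as a truncated rising factorial plus explicit Taylor-type corrections. Regrouping the resulting sum over $0 \le k \le p-2$ by the value of $e(k)$, the bottom block ($k \in I(\alpha,\beta) = [(p-1)/6, (p-1)/3]$) forms a dominant piece that, upon summation and application of Theorem \ref{lem:1}, should be identifiable with $\left(\frac{2}{p}\right) a_p(f_{64.6.1.f})$ modulo $p^5$. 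The blocks above the bottom carry (as in Part 1) the weight-$4$ and weight-$2$ contributions; the crux is that, although these are nonzero in $H_p$, they vanish modulo $p^5$ inside the truncated series $F(\alpha,\breve\beta;1)_{p-1}$ on the left-hand side.

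\textbf{Main obstacle.} The hard step is showing that these error contributions vanish modulo $p^5$. For this I would construct residue-sum identities in the spirit of \cite{LTYZ, OSZ, Zudilin} that kill the intermediate-level terms. Concretely, one looks for symmetries of the finite sum (the involution $k \mapsto p-1-k$, the pairing of $\{1/2, 1/2\}$ within $\alpha$, and the pairing of $7/6$ and $5/6$ within $\breve\beta$) that match up the weight-$4$ and weight-$2$ error pieces so they cancel to the required $p$-adic order, possibly by expressing them as lower-dimensional truncated hypergeometric sums known or conjectured to vanish. The target precision $p^5$, rather than the $p^3$ of Theorem \ref{thm:LTYZ}, reflects the Hodge gap of length $5 = k-1$ for $k = 6$ and fits the Roberts–Rodriguez-Villegas philosophy of \cite{RR}.
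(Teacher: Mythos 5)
This statement is a \emph{conjecture} in the paper: the author offers no proof, only (a) a heuristic motivation for part 1) via Whipple's $_7F_6(1)$ evaluation \eqref{eq:1/2-6} with $a=c=d=e=\frac12$, $f=\frac13$, $g=\frac23$, which links the rank-$6$ datum to the $_4F_3$ datum $\left\{\left\{\frac12,\frac12,\frac13,\frac23\right\},\{1,1,1,1\};1\right\}$ already tied to $f_{36.4.1.a}$ by \cite{LTYZ}, and (b) numerical verification with \texttt{Magma} (factorization of Euler factors for small $p$, matching of traces for $7\le p\le 31$, and a numerical check of the mod $p^5$ congruence). So there is no proof in the paper to compare yours against; the only fair comparison is whether your outline would actually close the question, and it would not.

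Your proposal is a strategy sketch, not a proof: every decisive step is phrased conditionally (``should split,'' ``should emerge naturally,'' ``should be identifiable,'' ``possibly by expressing them as \dots known or conjectured to vanish''). The concrete gaps are these. For part 1), asserting that the rank-$6$ local system ``should split over $\Q$'' into pieces matching the two named modular forms is exactly the content to be proved; identifying the quadratic twists $\left(\frac2p\right)$, $\left(\frac{-3}p\right)$, $\left(\frac3p\right)$ from the Gauss-sum expression is not automatic and you give no mechanism for it. For part 2), Theorem \ref{lem:1} only yields a mod $p$ statement, and the jump to mod $p^5$ is precisely where all known techniques stop short: the residue-sum method of \cite{LTYZ,OSZ,Zudilin} that you invoke has, in the closest analogous situation (Mortenson's conjecture \eqref{eq:Mortenson}), only ever produced a mod $p^3$ result, with mod $p^5$ still open. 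You correctly identify this as the ``main obstacle,'' but you do not supply the residue-sum identities or the symmetry argument that would kill the weight-$4$ and weight-$2$ contributions to the required $p$-adic depth, so the proposal leaves the essential difficulty untouched. As a roadmap it is consistent with the philosophy of \cite{RR} and with the paper's own motivation, but it does not constitute a proof of either part.
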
The second claim is a refinement of \eqref{eq:6}  and is comparable with Mortenson's conjecture \eqref{eq:Mortenson}.

We will explain how it is found and  list a few other cases in the last section. Note that Z.-W. Sun has many open conjectures on congruences. Interested readers are referred to \cite{Sun}.

%%%%%%%%%%%%%%%%%%%%%%%%%%%%%%%%%%%%%%%%%%%%%%%%%%%%%%%%%%%%%%%%%%%%%%%%%%%%%%%%%%%%
%%%%%%%%%%%%%%%%%%%%%%%%%%%%%%%%%%%%%%%%%%%%%%%%%%%%%%%%%%%%%%%%%%%%%%%%%%%%%%%%%%%%
\section{From classical hypergeometric formula to Conjecture \ref{conj:1}}
%%%%%%%%%%%%%%%%%%%%%%%%%%%%%%%%%%%%%%%%%%%%%%%%%%%%%%%%%%%%%%%%%%%%%%%%%%%%%%%%%%%%
%%%%%%%%%%%%%%%%%%%%%%%%%%%%%%%%%%%%%%%%%%%%%%%%%%%%%%%%%%%%%%%%%%%%%%%%%%%%%%%%%%%%
 A key motivation for both \cite{WIN3X} and the present article  is to turn classical hypergeometric formulas into useful geometric guidance.  For instance,   here is a  formula of Whipple, see Theorem 3.4.4 of the book \cite{AAR} by Andrews, Askey and Roy when both sides terminate.
\begin{multline}\label{eq:1/2-6}
\pFq{7}{6}{a&1+\frac a2&c&d&e&f&g}{&\frac a2& 1+a-c&1+a-d&1+a-e&1+a-f&1+a-g}{1}\\= \frac{\G(1+a-e)\G(1+a-f)\G(1+a-g)\G(1+a-e-f-g)}{\G(1+a)\G(1+a-f-g)\G(1+a-e-f)\G(1+a-e-g)}\\ \times\pFq{4}{3}{a&e&f&g}{&e+f+g-a&1+a-c&1+a-d}{1}
\end{multline}

In the finite field analogue, the $_7F_6$  is reducible as $1+\frac a2$ and $\frac a2$ correspond to the same multiplicative character. Thus the $_7F_6$ is linked to
\begin{equation}\label{eq:6F5}
\pFq{6}{5}{a&c&d&e&f&g}{& 1+a-c&1+a-d&1+a-e&1+a-f&1+a-g}{1}.
\end{equation}

We next pick $a,c,d,e,f,g$ such that the hypergeometric data for both \eqref{eq:6F5} and the $_4F_3$   are both defined over $\Q$.
%%%%%%%%%%%%%%%%%%%%%%%%%%%%%%%%%%%%%%%%%%%%%%%%%%%%%%%%%%%%%%%%%%%%%%%%%%%%%%%%%%%%
For example, we can let $a=c=d=e=f=g=\frac 12$,  so the hypergeometric datum for \eqref{eq:6F5} is $$ \left \{ \left \{\frac 12,\frac 12,\frac12,\frac12,\frac 12,\frac 12\right \},\{1,1,1,1,1,1\} ;1 \right \}.$$  In this case $H_p(\alpha,\beta;1)$ has an  explicit decomposition as follows, which  was conjectured by Koike and proved by Frechette, Ono and Papanikolas in \cite{FOP}. For any odd prime $p$ $$H_p(\alpha,\beta;1)=a_p(f_{8.6.a.a})+a_p(f_{8.4.a.a})\cdot p+\left (\frac{-1}p\right)p^2.$$ The factor $a_p(f_{8.4.a.a})\cdot p$ is associated with the right hand side of \eqref{eq:1/2-6} where $p$ corresponds to the Gamma values and the modular form $f_{8.4.a.a}$  arises from the  $_4F_3$ as for odd primes $p$
$$H_p\left(\left \{ \frac12,\frac12,\frac 12,\frac 12 \right\},\{1,1,1,1\} ;1\right)=a_p(f_{8.4.a.a})+p.$$ In other words \eqref{eq:1/2-6} implies a decomposition of the character sum corresponding to \eqref{eq:6F5}.

\medskip

Next we let $a=c=d=e=\frac12$ and $f=\frac 13,g=\frac23$. The datum for \eqref{eq:6F5} is $$H_1=\left \{\left\{\frac12,\frac12,\frac12,\frac12,\frac13,\frac23\right\},\left \{1,1,1,1,\frac76,\frac56\right\};1\right\},$$ the same datum for Conjecture \ref{conj:1}. The datum for the $_4F_3$ is $H_2=\left \{\left\{\frac12,\frac12,\frac13,\frac23\right\},\left \{1,1,1,1\right\};1\right\}.$ By Theorem 2 of \cite{LTYZ},
$$H_p\left(\left\{\frac12,\frac12,\frac13,\frac23\right\},\left \{1,1,1,1\right\};1\right)=a_p(f_{36.4.a.a})+\left (\frac 3p\right)p.$$

Recall  the local zeta function for a hypergeometric data $\{\alpha, \beta;\lambda \}$ defined over $\Z_p$ as follows.  $$P_p(\{\alpha,\beta;\l \},T):=\exp\left(\sum_{s=1}^\infty \frac{H_{p^s}(\alpha,\beta;\l)}sT^s\right)^{(-1)^n},$$ which is known to be a polynomial with constant 1. Assume $P_p(\{\alpha,\beta;\l \},T)=\prod_i(\mu_i T-1)$. When $\l=1$ and $n$ even, the structure of the hypergeometric motive is usually degenerated in the sense one of the $\mu_i$'s' has smaller absolute value as a complex number.  For instance, corresponding to Conjecture \ref{conj:1}, when $p\nmid M$, $P_p(\{\alpha,\beta;1 \},T)$ is a polynomial of degree 5. Among the absolute values of $\mu_i$, four of them are $p^{5/2}$ and one of them is $p^2$, corresponding to $\left(\frac{-3}p\right)p^2$ in the formula for $H_p$.

In \texttt{Magma}, there is an implemented hypergeometric package by Watkins (see \cite{Watkins} for its documentation). In particular one can use it to compute $P_p(\{\alpha,\beta;\l \},T)$. When $\l=1$, the implemented command has the singular linear factor  removed. For instance, typing the following into \texttt{Magma} \medskip

\noindent \texttt{H1 := HypergeometricData([1/2,1/2,1/2,1/2,1/3,2/3],[1,1,1,1,1/6,5/6]);}

\noindent \texttt{EulerFactor(H1,1,5);  ([Here we let $p=5$.])}

\medskip

\noindent The output is \medskip

   \noindent \texttt{ <$9765625*\$.1^4 + 112500*\$.1^3 + 1390*\$.1^2 + 36*\$.1 + 1$>} \medskip

   Moreover, the negation of the linear coefficient above, $-36$, equals $\left ( \frac {2}5\right )a_5(f_{64.6.a.f})+\left ( \frac {-3}5\right )5\cdot a_5(f_{36.4.a.a})$.

\medskip

Next we use $p=7$ to illustrate  the first claim of Conjecture \ref{conj:1}. Typing \medskip

\noindent \texttt{Factorization(EulerFactor(H1,1,7));}\medskip

We get the following decomposition\medskip

 \noindent   <$16807*\$.1^2 - 56*\$.1 + 1,1$>,

 \noindent   <$16807*\$.1^2 + 88*\$.1 + 1, 1$>

\medskip
To see the role of the second hypergeometric datum $H_2$, use \medskip

\noindent \texttt{H2 := HypergeometricData([1/2,1/2,1/3,2/3],[1,1,1,1]);}

\noindent \texttt{Factorization(EulerFactor(H2,1,7));} \medskip

We get

\medskip

  \noindent  <$343*\$.1^2 - 8*\$.1 + 1, 1$>.

\medskip

Notice that if we let $f(x)=343x^2 - 8x + 1$ as above, then $f(px)=f(7x)=16807x^2 - 56x + 1$ coincides with the first  quadratic factor of \texttt{Factorization(EulerFactor(H1,1,7))}.

\medskip

The following command in \texttt{Magma} allows us to produce a sequence. \medskip

\noindent \texttt{[-Coefficient(EulerFactor(H1,1,p),1)+LegendreSymbol(-3, p)*p*Coefficient(EulerFactor(H2,1,p),1) : p in PrimesUpTo(31) | p ge 7];}  \medskip

which gives
\medskip

\noindent \texttt{[$-88, 540, -418, 594, 836, -4104, -594, 4256$]} \medskip

It coincides with $\left ( \frac {2}p\right )a_p(f_{64.6.a.f})$ when $p$ varies from 7 to 31, confirming the roles of two modular forms in  the first statement of Conjecture \ref{conj:1}, we then compute the sign for the $p^2$ term and check the second statement numerically. \medskip

It is natural to ask whether the parameter modification, i.e. from $\frac 16$ to $\frac 76$ is necessary in Conjecture \ref{conj:1}.   Numerically we have for each prime $p\ge7$,

$$-2p\cdot \, \pFq{6}{5}{\frac 12&\frac 12&\frac 12&\frac 12&\frac 13&\frac 23}{&1&1&1&\frac16&\frac56}{1}_{p-1}\equiv \left ( \frac {2}p\right )a_p(f_{64.6.a.f}) \mod p.$$ The power $p^1$ is sharp for a generic $p$ and the left hand side has an additional multiple of $-2$ which can be explained as follow. Letting $a=c=d=\frac 12$, $f=\frac 13,g=\frac 23$ and $e=\frac{1-p}2$ to Equation \eqref{eq:1/2-6} we have
\begin{equation*}
\pFq{7}{6}{\frac 12&\frac 54&\frac 12&\frac 12&\frac 12&\frac 13&\frac 23}{&\frac 14&1&1&1&\frac76&\frac56}{1}_{p-1}\in \Z_p
\end{equation*}which implies
$$-2p\cdot \, \pFq{6}{5}{\frac 12&\frac 12&\frac 12&\frac 12&\frac 13&\frac 23}{&1&1&1&\frac16&\frac56}{1}_{p-1}\equiv p\cdot \, \pFq{6}{5}{\frac 12&\frac 12&\frac 12&\frac 12&\frac 13&\frac 23}{&1&1&1&\frac76&\frac56}{1}_{p-1} \mod p.$$

\bigskip

\begin{remark}
For the hypergeometric datum $\{\{\frac 12,\frac12,\frac12,\frac12,\frac13,\frac16\},\{1,1,1,1,\frac14,\frac34\},1\}$ defined over $\Q$ which cannot be obtained from specializing parameters in Equation \eqref{eq:1/2-6}. Numerically its Euler $p$ factors are irreducible over $\Z$ for many $p$'s.
\end{remark}
\bigskip

%%%%%%%%%%%%%%%%%%%%%%%%%%%%%%%%%%%%%%%%%%%%%%%%%%%%%%%%%%%%%%%%%%%%%
\section{More Numeric findings}
\subsection{A few other $_6F_5(1)$ cases}

%%%%%%%%%%%%%%%%%%%%%%%%%%%%%%%%%

\subsubsection{$\alpha=\{\frac 12,\frac 12,\frac13,\frac23,\frac13,\frac 23\}, \beta=\{1,1,\frac16,\frac56,\frac16,\frac 56\} $}

\begin{center}
\includegraphics[scale=0.25,origin=c]{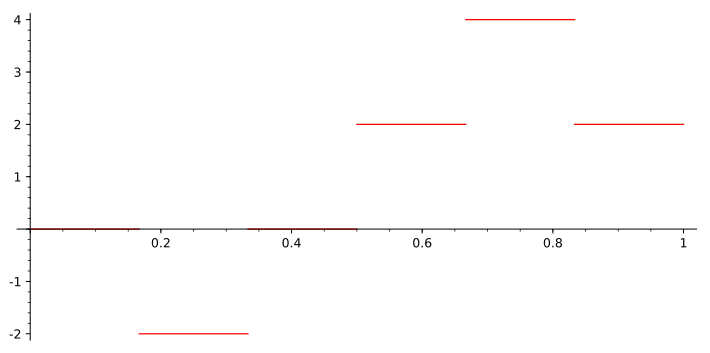}

{$e(pk)$ for $\alpha=\{\frac 12,\frac 12,\frac13,\frac23,\frac13,\frac 23\}$, $\beta=\{1,1,\frac16,\frac 56,\frac16,\frac 56\}$ }

\end{center}
\medskip

 Here $w(\alpha,\beta)=6$, $s(\alpha,\beta)=-1$ and $I(\alpha,\beta)$ is connected, $\hat \alpha=\alpha$ and $\breve  \beta=\{1,1,\frac76,\frac56,\frac76,\frac 56\}$.
Similar to the previous discussion, we specify $a=c=\frac12$, $d=f=\frac13,e=g=\frac 23$ in Equation \eqref{eq:1/2-6}.
\begin{conj}
There is a weight 6 modular form  $f_{48.6.a.c}$ such that for all primes $p\ge 7$,
$$p^2\cdot \, \pFq{6}{5}{\frac 12&\frac 12&\frac 13&\frac 23&\frac 13&\frac 23}{&1&\frac76&\frac56&\frac76&\frac56}{1}_{p-1}\equiv \left (\frac{-1}p\right)a_p(f_{48.6.a.c}) \mod p^4.$$
\end{conj}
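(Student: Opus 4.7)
The plan is to follow the strategy of Section~2 used for Conjecture~\ref{conj:1}, adapted to this heavier six-parameter, weight-six setting. First I would verify the hypotheses of Theorem~\ref{lem:1} for
\[
\alpha=\left\{\frac12,\frac12,\frac13,\frac23,\frac13,\frac23\right\},\qquad \beta=\left\{1,1,\frac16,\frac56,\frac16,\frac56\right\}:
\]
the paper already states $s(\alpha,\beta)=-1$, $w(\alpha,\beta)=6$, and that $I(\alpha,\beta)=\left[\frac{p-1}{6},\frac{p-1}{3}\right]$ is connected; moreover $t=\sum_j a_0(b_j)$ is even for every odd prime, since each pair $(1/6,5/6)$ contributes $p+1$ and each entry $1$ contributes $1$. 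Theorem~\ref{lem:1} then yields the base identity
\[
p\cdot F(\hat\alpha,\breve\beta;1)_{p-1}\equiv p\cdot H_p(\alpha,\beta;1)\mod p,
\]
and it suffices to prove the sharpened companion
\[
p^2\cdot H_p(\alpha,\beta;1)\equiv \left(\frac{-1}{p}\right)a_p(f_{48.6.1.c})\mod p^4
\]
at the finite-hypergeometric level.

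Next I would apply Whipple's transformation~\eqref{eq:1/2-6} in its finite-field analogue with the specialization $a=c=\frac12$, $d=f=\frac13$, $e=g=\frac23$ proposed in the paper. Classically the $_7F_6$ collapses (the pair $(1+a/2,a/2)=(5/4,1/4)$ contributes a $(1+4k)$ weight and the bottom $1+a-c=1$ absorbs a top $1$) to the $_6F_5$ on the left, and on the right one finds a Gamma-factor prefactor times a $_4F_3$ attached to
\[
(\alpha_4,\beta_4)=\left(\left\{\frac12,\frac13,\frac23,\frac23\right\};\left\{1,1,\frac76,\frac76\right\}\right).
\]
This $(\alpha_4,\beta_4)$ is not itself defined over $\Q$, but its combination with the prefactor is Galois-stable: via Gross--Koblitz, the prefactor absorbs one power of $p$ beyond $-s(\alpha,\beta)=1$, accounting for the $p^2$ normalization in the conjecture (the extra $p$ traces back to the pole ratio $\G(-1/6)/\G(1/6)$), while the Frobenius ambiguity on cube roots of unity contracts to the quadratic twist $\left(\frac{-1}{p}\right)$.

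I would then identify $f_{48.6.1.c}$ with the rigid quadratic factor of the $\ell$-adic Galois representation attached to the hypergeometric motive $M(\alpha,\beta;1)$. Watkins' \texttt{Magma} package produces the degree-$5$ local Euler polynomial $P_p(\{\alpha,\beta;1\},T)$, and the philosophy of \cite{RR} and \cite{BCM} predicts a factorization into one quadratic of motivic weight~$5$ (carrying $a_p(f_{48.6.1.c})$), a lower-weight factor coming from the $_4F_3$ side, and a degenerate linear piece -- paralleling the three terms of the analogous decomposition in the first part of Conjecture~\ref{conj:1}. A Faltings--Serre or Sturm-bound-style Hecke check would then pin down the specific LMFDB form.

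The principal technical obstacle, and where most of the real work lies, is upgrading the base mod $p$ congruence to mod $p^4$. I would adapt the $p$-adic perturbation method of \cite{Long,LR,LTYZ}: write
\[
p^2\cdot H_p(\alpha,\beta;1)-\left(\frac{-1}{p}\right)a_p(f_{48.6.1.c})
\]
as a main term matched by Whipple to $a_p(f_{48.6.1.c})$ plus error terms indexed by $k\notin I(\alpha,\beta)$ and graded by the excess $e_{\alpha,\beta}(k)-s(\alpha,\beta)$, with an iterated form of Lemma~\ref{lem:nu} supplying one extra power of $p$ per grade. For the weight-$4$ cases in \cite{LTYZ} the analogous errors vanished modulo $p^3$ via a single residue-sum identity; the weight-$6$ target here demands one further power of $p$, which will probably require a new identity of the kind produced by an iterated Whipple or a Bailey $_9F_8$ transformation, or alternatively a Beukers--Delaygue-type input from the underlying hypergeometric differential equation as in \cite{BD}. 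Producing that identity is where I expect the hard part to lie.
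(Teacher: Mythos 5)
This statement is a \emph{conjecture} in the paper: no proof is given, only the heuristic that motivates it (Whipple's formula \eqref{eq:1/2-6} specialized at $a=c=\frac12$, $d=f=\frac13$, $e=g=\frac23$ --- exactly the specialization you name) together with \texttt{Magma} computations of Euler factors for small $p$. Your proposal faithfully reconstructs that motivation, including the verification of the hypotheses of Theorem \ref{lem:1}, the parity of $t$, and the expectation that the motive attached to $\{\alpha,\beta;1\}$ carries a weight-$5$ quadratic factor matching $f_{48.6.1.c}$. But it is a research plan, not a proof, and you concede as much in your final paragraph; since the paper also supplies no proof, the entire modulo-$p^4$ assertion remains open on both sides.

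Beyond that global gap, there is a concrete error in your proposed reduction. The ``sharpened companion'' $p^2\cdot H_p(\alpha,\beta;1)\equiv \left(\frac{-1}{p}\right)a_p(f_{48.6.1.c}) \bmod p^4$ cannot be the right target: since $s(\alpha,\beta)=-1$, the quantity $p\cdot H_p(\alpha,\beta;1)$ is already $p$-integral, so $p^2\cdot H_p(\alpha,\beta;1)\equiv 0 \bmod p$, while $a_p(f_{48.6.1.c})$ is a $p$-adic unit for ordinary $p$; the correct finite-field statement would have to be an exact decomposition of $H_p$ analogous to part~1) of Conjecture \ref{conj:1}, not a congruence of $p^2H_p$ modulo $p^4$. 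Moreover, even with a corrected $H_p$-level identity in hand, Theorem \ref{lem:1} transfers information between $H_p$ and the truncated series only modulo $p$, so it can never ``suffice'' for a modulo-$p^4$ statement about $F(\hat\alpha,\breve\beta;1)_{p-1}$: one needs the full higher-order Gross--Koblitz expansion and control of all graded error terms to depth $4$, which is precisely the missing identity you flag at the end and which neither you nor the paper produces. Finally, note the normalization mismatch you inherit from \eqref{eq:generic}: with $-s(\alpha,\beta)=1$ that theorem normalizes by $p$, whereas the conjecture uses $p^2$ (forced by the two copies of $\frac76$ in $\breve\beta$, whose rising factorials each contribute a $p$ to denominators), so even your stated ``base identity'' does not line up with the congruence to be proved.
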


\medskip

\subsubsection{ $\alpha=\{\frac12,\frac12,\frac12,\frac12,\frac16,\frac 56\}$, $\beta=\{1,1,1,1,\frac13,\frac 23\}$}

\begin{center}
\includegraphics[scale=0.25,origin=c]{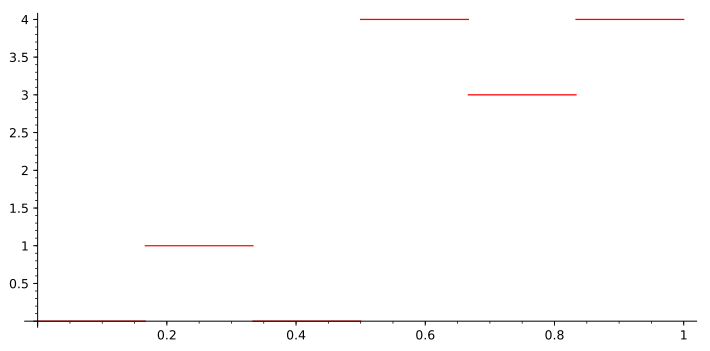}

{$e(pk)$  for $\alpha=\{\frac12,\frac12,\frac12,\frac12,\frac16,\frac 56\}$, $\beta=\{1,1,1,1,\frac13,\frac 23\}$ }

\end{center} In this case $w(\alpha,\beta)=4$, $s(\alpha,\beta)=0$, %$\hat \beta=\{1,1,1,1,\frac43,\frac 23\}$
and $I(\alpha,\beta)$ consists of two disjoint intervals. Anyway, we proceed similarly as the previous cases. In Equation \eqref{eq:1/2-6}, we let $a=c=d=e=\frac 12$, $f=\frac 16,g=\frac 56$.  Numerically we have
\begin{conj}For primes $p\ge 5$, $\alpha=\left \{\frac 12,\frac 12,\frac 12,\frac 12,\frac 16,\frac 56\right \}$, $\breve  \beta=\left \{1,1,1,1,\frac 43,\frac 23\right \}$
\[
H_p(\alpha,\breve  \beta;1)=\left ( \frac 2p\right)a_p(f_{64.4.a.d})+\left (\frac{-3}p\right)a_p(f_{72.4.a.b})+\left (\frac{3}p\right)p.
\]
\begin{equation*}\pFq{6}{5}{\frac 12&\frac 12&\frac 12&\frac 12&\frac 16&\frac 56}{&1&1&1&\frac43&\frac23}{1}_{p-1}\equiv \left ( \frac 2p\right)a_p(f_{64.4.a.d}) \mod p^3.\end{equation*}
\end{conj}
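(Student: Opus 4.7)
The plan mirrors the derivation of Conjecture \ref{conj:1} carried out in Section 2, adapted to this borderline case where $s(\alpha,\breve\beta)=0$ but the bottom interval $I(\alpha,\breve\beta)$ is disconnected. The starting point is Whipple's identity \eqref{eq:1/2-6} with the specialization $a=c=d=e=\tfrac12$, $f=\tfrac16$, $g=\tfrac56$, which presents the terminating $_7F_6$ whose character-sum avatar is $H_p(\alpha,\breve\beta;1)$ as a Gamma quotient times the terminating $_4F_3\left[\tfrac12,\tfrac12,\tfrac16,\tfrac56;1,1,1;1\right]$. The first step is to translate Whipple into the finite-field setting using the multiplication and reflection identities for Gauss sums (following \cite{BCM,WIN3X}), producing a decomposition
\[
H_p(\alpha,\breve\beta;1) \;=\; c_1(p)\cdot H_p\bigl(\{\tfrac12,\tfrac12,\tfrac16,\tfrac56\},\{1,1,1,1\};1\bigr) \;+\; c_0(p),
\]
in which $c_1(p)$ and $c_0(p)$ are explicit sign-times-power-of-$p$ terms obtained from Gross-Koblitz. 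The $_4F_3$ piece is covered by Theorem \ref{thm:LTYZ} with $r_1=\tfrac12$, $r_2=\tfrac16$; at the $H_p$ level one expects $H_p = a_p(f_{72.4.1.b}) + \chi(p)\,p$ for an explicit quadratic character $\chi$, which accounts for two of the three summands in the first assertion.

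Verifying the first assertion then reduces to identifying the remaining summand with $\left(\tfrac2p\right)a_p(f_{64.4.1.d})$. The proposed route is to exhibit a two-dimensional sub-motive of the hypergeometric motive attached to $\{\alpha,\breve\beta;1\}$ and to match its $\ell$-adic Galois representation with the one attached to $f_{64.4.1.d}$ via Livn\'e's criterion: once conductor, determinant, and Hodge-Tate weights agree, equality of Frobenius traces on an explicit finite set of primes forces equality of the full representations. Watkins' \texttt{Magma} package already makes the required traces computable for small $p$, so once the conductor has been pinned down the remaining verification is algorithmic.

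The $\bmod\, p^3$ supercongruence for the truncated $_6F_5$ is the more delicate half, because the disconnectedness of $I(\alpha,\breve\beta)$ blocks a direct application of Theorem \ref{lem:1}. The plan is to adapt the $p$-adic perturbation method of \cite{LR,LTYZ}: use Theorem \ref{lem:0} to expand $H_p(\alpha,\breve\beta;1)$, apply Lemma \ref{lem:nu} on each connected component of $I(\alpha,\breve\beta)$ separately to extract two copies of the truncated $_6F_5$ with different integer shifts in their Pochhammer symbols, combine them via a shifted-factorial identity, and show that the graded error contributed by the regions where $e(k)\ge 1$ is annihilated modulo $p^3$ by residue-sum identities of the type used in \cite{Zudilin,OSZ}. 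The main obstacle is exactly this last cancellation: peeling off $\left(\tfrac{-3}p\right)a_p(f_{72.4.1.b})+\left(\tfrac3p\right)p$ from the full $H_p$ while keeping error modulo $p^3$ forces one to treat the two clusters of $I$ in tandem, and whereas \cite{LTYZ} handled the connected case with a single residue-sum identity, the disconnected case here appears to require a paired family of such identities whose existence and concrete form is the technical heart of the argument.
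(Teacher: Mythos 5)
You should first be aware that the paper does not prove this statement: it is stated as a conjecture, and the only support offered is numerical --- \texttt{Magma} computations of the Euler factors for small primes, factored and matched against the coefficients of $f_{64.4.1.d}$ and $f_{72.4.1.b}$, exactly parallel to the treatment of Conjecture \ref{conj:1} in Section 2. So there is no argument in the paper to measure yours against; the only question is whether your outline closes the gap, and it does not.

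Two concrete gaps. First, the step you yourself call ``the technical heart of the argument'' --- annihilating the graded error terms modulo $p^3$ over the two components of the bottom interval by a ``paired family'' of residue-sum identities --- is asserted to be needed but is not produced, and nothing in the paper or in \cite{LTYZ} supplies it; without it the second congruence is not established beyond what Theorem \ref{lem:0} and Lemma \ref{lem:nu} give directly, and the paper explicitly notes that $I(\alpha,\beta)$ being disconnected here means Theorem \ref{lem:1} does not apply. Second, and more structurally, the paper points out that in this case ($s(\alpha,\beta)=0$, $w(\alpha,\beta)=4$) the local zeta function may have \emph{two} $p$-adic unit roots simultaneously, coming from the $f_{64.4.1.d}$ piece and the $f_{72.4.1.b}$ piece respectively; the content of the second assertion is precisely that the truncated $_6F_5$ consistently selects the $\left(\frac{2}{p}\right)a_p(f_{64.4.1.d})$ root rather than the other. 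Your plan of ``peeling off'' the other two summands does not explain why the truncation has this preference, and Dwork's theory underlying the unit-root congruence \eqref{eq:unit} is agnostic between the two unit roots. The Whipple specialization $a=c=d=e=\frac12$, $f=\frac16$, $g=\frac56$ in \eqref{eq:1/2-6} and the appeal to Theorem \ref{thm:LTYZ} with $(r_1,r_2)=(\frac12,\frac16)$ for the $_4F_3$ piece are reasonable and match the paper's heuristic, and a Livn\'e-style identification of the degree-2 factor is in principle feasible once one knows the motive splits over $\Q$ --- but that splitting, the conductor computation, and above all the mod $p^3$ refinement all remain open, so the statement stays a conjecture.
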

%Note that for this case the local zeta function for $\{\alpha,\breve  \beta;1\}$ might have  two $p$-adic unit roots simultaneously. But the truncated hyerpgeometric series have a consistent preference.

\medskip

%Numerically we also have the following  Dwork-type congruence, parallel to \eqref{eq:7}.
%For $\alpha=\{\frac 12,\frac12,\frac12,\frac12,\frac16,\frac56\},\breve  \beta=\{1,1,1,1,\frac43,\frac23\}$, any prime $p\ge 7$, positive integers $t\ge s$

%\begin{equation*}%\label{eq:16}
%F(\alpha,\breve  \beta;1)_{p^s-1}F(\alpha, \breve  \beta;1)_{p^{t-1}-1}\equiv F(\alpha, \breve  \beta;1)_{p^t-1}F(\alpha,\breve  \beta;1)_{p^{s-1}-1} \mod p^{3s}.
%\end{equation*}

\subsection{Some $_4F_3(1)$ cases}

In addition, we found numerically a few $_4F_3(1)$ supercongruences for hypergeometric motives. We  first list 6 cases with weights $w(\alpha,\beta)=4$. Numerically they satisfy supercongruences analogous to the statement of Theorem \ref{thm:LTYZ}.

\begin{conj}For each prime $p\ge 7$,

\begin{equation}p\cdot \pFq{4}{3}{\frac12&\frac12&\frac13&\frac23}{&1&
\frac54&\frac34}{1}_{p-1}\equiv \left (\frac {3}p\right )a_p(f_{48.4.a.c}) \mod p^3.\end{equation}

\begin{equation}p\cdot \pFq{4}{3}{\frac12&\frac12&\frac13&\frac23}{&1&\frac76&\frac56}{1}_{p-1}\equiv \left (\frac {-1}p\right )a_p(f_{48.4.a.c}) \mod p^3.\end{equation}

\begin{equation}p\cdot \pFq{4}{3}{\frac12&\frac12&\frac14&\frac34}{&1&\frac76&\frac56}{1}_{p-1}\equiv  a_p(f_{48.4.a.c}) \mod p^3.\end{equation}

\begin{equation}p\cdot \pFq{4}{3}{\frac12&\frac12&\frac12&\frac12}{&1&\frac43&\frac23}{1}_{p-1}\equiv a_p(f_{24.4.a.a}) \mod p^3.\end{equation}

\begin{equation}p\cdot \pFq{4}{3}{\frac12&\frac12&\frac12&\frac12}{&1&\frac76&\frac56}{1}_{p-1}\equiv a_p(f_{12.4.a.a}) \mod p^3.\end{equation}

\begin{equation}p\cdot \pFq{4}{3}{\frac12&\frac12&\frac12&\frac12}{&1&\frac54&\frac34}{1}_{p-1}\equiv a_p(f_{64.4.a.b}) \mod p^3.\end{equation}
\end{conj}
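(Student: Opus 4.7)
The plan is to adapt the three-step strategy behind Theorem \ref{thm:LTYZ} to each of the six $_4F_3(1)$ congruences, and I will describe it for the first one,
$$p\cdot \pFq{4}{3}{\frac12&\frac12&\frac13&\frac23}{&1&\frac54&\frac34}{1}_{p-1}\equiv \left(\frac{3}{p}\right) a_p(f_{48.4.1.c}) \mod p^3,$$
as the remaining five follow the same template with different Whipple specializations.

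First I would verify that the underlying datum $\{\alpha,\beta;1\}$ with $\alpha=\{\frac12,\frac12,\frac13,\frac23\}$ and $\beta=\{\frac14,\frac34,1,1\}$ satisfies condition $(\diamond)$, then plot the step function $e(k)$ to confirm $s(\alpha,\beta)<0$ and that $I(\alpha,\beta)$ is connected. Granting this, Theorem \ref{lem:1} yields
$$p\cdot F(\hat\alpha,\breve\beta;1)_{p-1}\equiv p\cdot H_p(\alpha,\beta;1) \mod p,$$
with $\breve\beta=\{\frac54,\frac34,1,1\}$ matching the lower parameters of the truncated series in the conjecture. The task then reduces to identifying $p\cdot H_p(\alpha,\beta;1)$ with $\left(\frac{3}{p}\right) a_p(f_{48.4.1.c})$ modulo $p^3$.

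Second, I would specialize Whipple's $_7F_6$ identity \eqref{eq:1/2-6} in the style of the derivation of Conjecture \ref{conj:1} in Section 2, choosing the free parameters so that the right-hand $_4F_3$ collapses to the datum $\{\{\frac12,\frac12,\frac13,\frac23\},\{1,1,1,1\};1\}$, whose finite hypergeometric value is $a_p(f_{36.4.1.a})+\left(\frac{3}{p}\right)p$ by Theorem \ref{thm:LTYZ}. A $p$-adic evaluation of the Gamma prefactor on the right-hand side of \eqref{eq:1/2-6}, through the reflection formula \eqref{eq:reflect} together with the Gross–Koblitz formula, should produce precisely the Legendre symbol twist and identify $f_{48.4.1.c}$ as the appropriate quadratic twist of $f_{36.4.1.a}$; this last identification can be independently confirmed on $q$-expansions using LMFDB data. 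The congruences involving the parameter string $\{\frac12,\frac12,\frac12,\frac12\}$ reduce by the same mechanism to the Rodriguez–Villegas cases already handled in \cite{LTYZ}, while the case with $\{\frac12,\frac12,\frac14,\frac34\}$ uses the specialization $f=\frac14, g=\frac34$ in \eqref{eq:1/2-6}.

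The hardest step, as always in this circle of problems, is the lift from the mod $p$ congruence to mod $p^3$. Following the $p$-adic perturbation method of \cite{Long,LR,LTYZ}, I would rewrite the difference $p\cdot F(\hat\alpha,\breve\beta;1)_{p-1}-\left(\frac{3}{p}\right) a_p(f_{48.4.1.c})$ as a regrouped character sum whose individual summands involve Pochhammer quotients and $p$-adic Gamma values, each only $O(p)$. To achieve the target depth $p^3$ one must demonstrate that the $p^1$ and $p^2$ coefficients of the pooled error vanish. This is where I anticipate the main obstacle: in \cite{LTYZ} the analogous vanishing relied on residue-sum identities exploiting the full symmetry of $\beta=\{1,1,1,1\}$, whereas here the shifted denominator parameters $\{\frac54,\frac34\}, \{\frac76,\frac56\}, \{\frac43,\frac23\}$ disrupt that symmetry, forcing a tailored residue identity for each of the six congruences in the spirit of \cite{OSZ,Zudilin}. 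I expect a unified proof is unlikely, but the weight-$4$ setting restricts the anticipated depth to $p^3$, consistent with the ASD heuristic $p^{(k-1)s}=p^{3s}$ at $s=1$, giving confidence that the cancellations exist to be found.
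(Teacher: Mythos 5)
This statement is presented in the paper as a \emph{conjecture}: the surrounding text says only ``we found numerically a few $_4F_3(1)$ supercongruences,'' and no proof of any of the six congruences is given (only the two weight-2, modulo-$p$ congruences stated \emph{after} the conjecture are claimed to follow from the method of \cite{LTYZ}). So there is no proof in the paper to compare against, and your submission must be judged as a standalone argument. As such it is a research plan, not a proof, and you concede the decisive point yourself: the vanishing modulo $p^2$ of the regrouped error terms --- which is the entire content of the supercongruence, since everything up to that point only controls the quantities modulo $p$ --- is left as an ``anticipated obstacle'' with no candidate residue-sum identities exhibited. Until that step is carried out for each of the six data, nothing beyond a modulo-$p$ statement has been established, and even the modulo-$p$ statement is conditional on an identification of $H_p(\alpha,\beta;1)$ that you do not complete.

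Two further concrete problems with the outline. First, Theorem \ref{lem:1} yields only $(-1)^t p^{-s}H_p(\alpha,\beta;1)\equiv p^{-s}F(\hat\alpha,\breve\beta;1)_{p-1}\bmod p$, a single power of $p$; hence even a complete closed-form evaluation of $H_p(\alpha,\beta;1)$ in terms of $a_p(f_{48.4.1.c})$ would transfer to the truncated series only modulo $p$, and the jump to $p^3$ must come entirely from the perturbation analysis you defer. Second, Whipple's formula \eqref{eq:1/2-6} as used in the paper relates a very-well-poised $_7F_6$ (equivalently a $_6F_5$ datum) to a $_4F_3$ whose lower parameters are all forced to be $1$ by the choices made in Section 2; none of the six $_4F_3$ data here, with lower parameters such as $\left\{\frac54,\frac34\right\}$, $\left\{\frac76,\frac56\right\}$, $\left\{\frac43,\frac23\right\}$, is of that shape, so the specialization you describe does not produce them. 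Several of these data are themselves well-poised and would sit on the \emph{left}-hand side of a lower-rank Whipple-type transformation, which is a different (and unproven) reduction; the identification of the modular forms $f_{48.4.1.c}$, $f_{24.4.1.a}$, $f_{12.4.1.a}$, $f_{64.4.1.b}$ in the paper rests only on matching Euler factors numerically. A correct write-up would have to either supply these transformations at the level of finite hypergeometric sums or abandon that route and attack the character sums directly.
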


There are two cases with $w(\alpha,\beta)=2$ relating to weight-2 modular forms. They can be obtained using the approach of \cite{LTYZ}.
 For each prime $p> 5$, %Conj. 8
\begin{equation*} \pFq{4}{3}{\frac12&\frac12&\frac16&\frac56}{&1&\frac43&\frac23}{1}_{p-1}\equiv  a_p(f_{24.2.a.a}) \mod p.\end{equation*}

\begin{equation*} \pFq{4}{3}{\frac12&\frac12&\frac14&\frac34}{&1&\frac43&\frac23}{1}_{p-1}\equiv  \left (\frac{-1}p\right)a_p(f_{24.2.a.a}) \mod p.\end{equation*}

\subsection{Some $_5F_4(-1)$ cases}

Here is  another evaluation formula of Whipple, see Theorem 3.4.6 of \cite{AAR}.
\begin{multline}\label{eq:Whipple2}
\pFq{6}{5}{a&a/2+1&b&c&d&e}{&a/2&a-b+1&a-c+1&a-d+1&a-e+1}{-1}\\
=\frac{\G(a-d+1)\G(a-e+1)}{\G(a+1)\G(a-d-e+1)}\pFq{3}{2}{a-b-c+1&d&e}{&a-b+1&a-c+1}{1}
\end{multline}

%%%%%%%%%%%%%
\subsubsection{$\alpha=\{\frac12,\frac12,\frac12,\frac12,\frac12\},\beta=\{1,1,1,1,1\}$} For this case, $s(\alpha,\beta)=0$, $w(\alpha,\beta)=5$,  $I(\alpha,\beta)=\left [0,\frac{p-1}2\right ]$, $\hat \alpha=\alpha,\breve  \beta=\beta$.

Letting $a=b=c=d=e=\frac12$ in  Equation \eqref{eq:Whipple2}, the left and right hand sides correspond to the hypergeometric data $$H_5=\left \{\left\{\frac12,\frac12,\frac12,\frac12,\frac12\right \},\{1,1,1,1,1\};-1\right \},\quad H_6=\left \{ \left \{\frac12,\frac12,\frac12\right \},\{1,1,1\};1  \right \},$$ respectively, both defined over $\Q$. We use the following in \texttt{Magma}.\medskip

\noindent \texttt{H5 := HypergeometricData([1/2,1/2,1/2,1/2,1/2], [1,1,1,1,1]);}

\noindent \texttt{Factorization(EulerFactor(H5,-1,5));} \medskip

We get \medskip

 \noindent   <25*\$.1 + 1, 1>,

 \noindent   <625*\$.$1^2$ - 34*\$.1 + 1, 1>,

 \noindent   <625*\$.$1^2$ + 30*\$.1 + 1, 1>

    \medskip

Meanwhile, using \medskip

\noindent \texttt{H6 := HypergeometricData([1/2,1/2,1/2], [1,1,1]);}

\noindent \texttt{Factorization(EulerFactor(H6,1,5));} \medskip

We get \medskip

<25*\$.$1^2$ + 6*\$.1 + 1, 1>

\medskip

 Further using the following we get a sequence denoted by $A_p$ where the subscript $p$ refers to the corresponding prime $p$. \medskip

\noindent \texttt{[Coefficient(EulerFactor(H5,-1,p),1)-p*Coefficient(EulerFactor(H6,1,p),1) }

\noindent\texttt{-LegendreSymbol(-3,p)*p$^2$: in PrimesUpTo(67) |p ge 7];}  \medskip

It produces the first few  $A_p$ where $p$ ranges from 7 to 67 listed as follows. \medskip

\noindent \texttt{[30,42,62,478,-200,128,400,-1922,-2338,2462,-8,4608,3600,5162,-6658,-6728]}

\medskip Note that they are not $p$th coefficients of GL(2) Hecke eigenforms as each odd weight Hecke eighenform with integer coefficients has to admit complex multiplication. By appearance, half of the $p$th coefficients of a CM modular form should vanish, which is not the case here. This case should be related to a GL(3) automorphic form, which is a symmetric square of a GL(2) automorphic form as conjectured by Beukers and Delaygue in \cite{BD}.  \medskip

Numerically we have for each prime $p\ge 7$

\begin{equation}
\pFq{5}{4}{\frac12&\frac12&\frac12&\frac12&\frac12}{&1&1&1&1}{-1}_{p-1}\equiv -A_p \mod p^2.
\end{equation} When $p$ is ordinary, it is already shown in \cite{BD} that the left hand side is congruent to the corresponding unit root modulo $p^2$.\medskip

\subsubsection{$\{\{\frac12,\frac12,\frac12,\frac13,\frac23\},\{1,1,1,\frac16,\frac56\};-1\}$}

Similarly, $a=b=c=\frac12$, $d=\frac 13,e=\frac23$ in Equation \eqref{eq:Whipple2}, we get another list $B_p$ using \medskip

\noindent \texttt{H7 := HypergeometricData([1/2,1/2,1/2,1/3,2/3], [1,1,1,1/6,5/6]);}

\noindent \texttt{H8 := HypergeometricData([1/2,1/3,2/3], [1,1,1]);}

\noindent \texttt{[Coefficient(EulerFactor(H7,-1,p),1)-p*Coefficient(EulerFactor(H8,1,p),1)}

\noindent \texttt{-LegendreSymbol(3,p)*p$^2$: in PrimesUpTo(67) | p ge 7];} \medskip

From which we get the first few values of $B_p$ listed as follows \medskip

\noindent \texttt{[34,-230,-290,542,588,-576,432,898,-2690,-994, 972, -2304, 8112,-5990,670,6348]}

\medskip

Numerically for each prime $p\ge 7$
\begin{equation}
p\cdot \, \pFq{5}{4}{\frac12&\frac12&\frac12&\frac13&\frac23}{&1&1&\frac76&\frac56}{-1}_{p-1}\equiv -B_p \mod p^2.
\end{equation}

\begin{remark}
Letting $a=b=c=d=\frac 12$ and $e=\frac 34$ in  Equation \eqref{eq:Whipple2}, one relates the hypergeometric datum $\{\{\frac12,\frac12,\frac12,\frac12\},\{1,1,1,1\};-1\}$ to $\{\{\frac 12,\frac12,\frac34\},\{1,1,1\};1\}$, which is not defined over $\Q$. See (5.15) of \cite{MP} by McCarthy and Papanikolas for the precise relation between the corresponding character sums when $p\equiv 1\mod 4$. Accordingly, we have the following numeric result. When $p\equiv 1\mod 4$
\begin{equation}
\pFq{4}{3}{\frac12&\frac12&\frac12&\frac12}{&1&1&1}{-1}_{p-1}\equiv \left(\frac{2}p\right)\frac{\G_p(\frac14)^2}{\G_p(\frac12)}a_p(f_{32.3.c.a}) \mod p^2.
\end{equation}

\end{remark}

\end{document}